\newcommand{\alg}{\mathrm{alg}}
\newcommand{\CH}{\operatorname{CH}}
\newcommand{\chern}{\operatorname{ch}}
\newcommand{\e}{\operatorname{e}}
\newcommand{\Exp}{\operatorname{E}}
\newcommand{\Fourier}{\mathcal{F}}
\newcommand{\Hom}{\operatorname{Hom}}
\newcommand{\id}{\mathrm{id}}
\newcommand{\Ker}{\operatorname{Ker}}
\newcommand{\Mot}{\mathcal{M}}
\newcommand{\Pic}{\operatorname{Pic}}
\newcommand{\pr}{\mathrm{pr}}
\newcommand{\pprime}{{\prime\prime}}
\newcommand{\pt}{\mathrm{pt}}
\newcommand{\Sp}{\operatorname{Sp}}
\newcommand{\Spec}{\operatorname{Spec}}
\newcommand{\Sym}{\operatorname{Sym}}
\newcommand{\CC}{C}
\newcommand{\FF}{\mathcal{F}}
\newcommand{\GG}{\mathcal{G}}
\newcommand{\OO}{\mathcal{O}}
\newcommand{\PP}{\mathcal{P}}
\newcommand{\ag}{\mathfrak{a}}
\renewcommand{\P}{\mathbb{P}}
\newcommand{\Q}{\mathbb{Q}}
\newcommand{\mQ}{\mathbb{Q}}
\newcommand{\Z}{\mathbb{Z}}
\newcommand{\Cbul}{\CC^{[\bullet]}}
\newcommand{\sub}{\subset}
\newcommand{\ot}{\otimes}
\newcommand{\kbar}{\bar{k}}
\renewcommand{\a}{\alpha}
\newcommand{\De}{\Delta}
\newcommand{\eps}{\epsilon}
\renewcommand{\th}{\theta}
\numberwithin{equation}{subsection}
\newtheorem{thm}[subsection]{Theorem}
\newtheorem{prop}[subsection]{Proposition}
\newtheorem{lem}[subsection]{Lemma}
\newtheorem{cor}[subsection]{Corollary}
\theoremstyle{definition}
\newtheorem{defi}[subsection]{Definition}
\newtheorem{rem}[subsection]{Remark}
\newdimen\normalparindent       
\let\geq=\geqslant
\let\ge=\geqslant
\let\leq=\leqslant
\let\le=\leqslant
\begin{document}

\title{Divided powers in Chow rings and integral Fourier transforms}

\author{Ben Moonen}
\address{Department of Mathematics, University of Amsterdam, P.O.\ Box 94248, 1090~GE Amsterdam, The Netherlands}
\email{b.j.j.moonen@uva.nl}

\author{Alexander Polishchuk}
\address{Department of Mathematics, University of Oregon, Eugene, OR 97403, USA}
\email{apolish@uoregon.edu}
\thanks{The research of Alexander Polishchuk was partially supported by NSF grant DMS-0601034}


\begin{abstract} 
We prove that for any monoid scheme~$M$ over a field with proper multiplication maps $M \times M \to M$, we have a natural PD-structure on the ideal $\CH_{>0}(M) \subset \CH_*(M)$ with regard to the Pontryagin ring structure. Further we investigate to what extent it is possible to define a Fourier transform on the motive with integral coefficients of the Jacobian of a curve. For a hyperelliptic curve of genus~$g$ with sufficiently many $k$-rational Weierstrass points, we construct such an integral Fourier transform with all the usual properties up to $2^N$-torsion, where $N = 1 + \lfloor \log_2(3g)\rfloor$. As a consequence we obtain, over $k=\kbar$, a PD-structure (for the intersection product) on $2^N \cdot \ag$, where $\ag \subset \CH(J)$ is the augmentation ideal. We show that a factor~$2$ in the properties
of an integral Fourier transform cannot be eliminated even for elliptic curves over an algebraically closed field.
\end{abstract}

\maketitle



\section*{Introduction}

\noindent
Let $M$ be a commutative monoid with identity in the category of quasi-projective schemes over a field~$k$. Assume that the multiplication map $\mu \colon M \times M \to M$ is proper. On $\CH_*(M)$ we can then define a Pontryagin product, making it into a commutative graded ring.

The first main result of the paper is that there is a canonical PD-structure on the ideal $\CH_{>0}(M)$; see Theorem~\ref{PD-monoid-thm} and Cor.~\ref{PD-ungraded-cor}. The basic geometric idea is that for $Z \subset M$ a closed subvariety of positive dimension, the $d$th divided power of the class $[Z]$ is the class obtained by taking the image of the closed subscheme $\Sym^d(Z) \subset \Sym^d(M)$ under the iterated multiplication map $\Sym^d(M) \to M$. We also have a version of the theorem for graded monoids, and we prove that the PD-structure is functorial.

For an example of a situation where these divided powers naturally appear, consider a smooth proper curve~$C$ with Jacobian~$J$. Examples of monoids to which our result applies are~$J$ (ungraded case) and $\Cbul := \coprod_{n \geq 0}\, C^{[n]}$ with $C^{[n]}$ the $n$th symmetric power of~$C$ (graded case). In \cite{PartII} we have obtained a natural isomorphism $\beta \colon \CH_*(\Cbul) \xrightarrow{\sim} \CH_*(J)\bigl[t\bigr]\bigl\langle u\bigr\rangle$, where $\CH_*(\Cbul)$ and $\CH_*(J)$ are both equipped with the Pontryagin product, and where $R\langle u\rangle$ denotes the PD-polynomial algebra in the variable~$u$ over a commutative ring~$R$. Also we prove that the $\beta$ is a PD-isomorphism with regard to the PD-structures on source and target as defined in the present paper.

A related problem that was raised by B.~Kahn (see \cite{Esnault}) is whether it is possible, for an abelian variety~$X$ over a field $k$, to define a PD-structure on $\CH^{>0}(X)$ with respect to the usual intersection product. As shown in \cite{Esnault} the answer is in general negative. To our knowledge there are, however, no counterexamples with $k$ algebraically closed. Since the Pontryagin product and the usual product on $\CH(X)_{\Q}$
are interchanged by the Fourier transform $\CH(X)_{\Q} \to \CH(X^t)_{\Q}$, 
we are led to ask whether one can define an integral version of the Fourier transform. 
An integral Fourier transform having all the usual properties would allow us to transport the PD-structure on $\CH_*(X)$ for the Pontryagin product to a PD-structure on $\CH^*(X^t)$ for the intersection product. Our second main result is that when $X$ is the Jacobian $J=J(C)$ of a hyperelliptic curve~$C$ of genus~$g$, this plan works up to $2^N$-torsion, with $N = 1 + \lfloor \log_2(3g)\rfloor$. This relies on two
elementary results in intersection theory of $J$ and of powers of $C$ that we establish in 
section~\ref{hyper-sec}. Namely, complementing the results of Collino in~\cite{Collino}
we prove that up to $2$-torsion the intersections of classes of the Brill-Noether loci $W_i$ in~$J$
are given by the same formula as in cohomology. Also, we check that a modified diagonal class on
$C\times C\times C$ introduced by Gross and Schoen in \cite{GS} is $2$-torsion.

The idea for the construction of an integral Fourier transform $\Fourier$ for hyperelliptic Jacobians 
is as follows.
Working with $\Q$-coefficients one can express $\Fourier$ entirely in terms of $*$-products. To be precise, one has $\Fourier(a) = (-1)^g \cdot j_2^*\bigl(\Exp(\tau) * j_{1,*}(a)\bigr)$, where $j_1$, $j_2 \colon J \to J^2$ are the inclusions of the coordinate axes, $\tau \in \CH(J^2)_\Q$ is the class $(-1)^g \cdot \bigl(\Delta_* \Fourier(\theta) - j_{1,*} \Fourier(\theta) - j_{2,*}(\theta)\bigr)$, and where $\Exp(\tau)$ is the $*$-exponential of~$\tau$. (See \eqref{Fourier*Prod} in the text.) Our theorem on divided powers for the Pontryagin product allows us to define the $*$-exponential integrally. The next ingredient we need is the class $\Fourier(\theta)$, which, in general, we do not know how to define integrally. However, if $C$ is hyperelliptic and $\iota \colon C \to J$ is the embedding associated to a Weierstrass point $p_0 \in C$ then we have $\Fourier(\theta) = (-1)^{g+1}\cdot \bigl[\iota(C)\bigr]$. This puts us in a position to define, for hyperelliptic~$C$, an integral endomorphism $\Fourier \colon h(J) \to h(J)$ of the ungraded
motive~$h(J)$. We prove in Theorem~\ref{HEFourier} that it has all the expected properties up to $2^N$-torsion. As a corollary we obtain a natural PD-structure on a large ideal in $\CH(J)$ for the intersection product; see Cor.~\ref{PDIntersProd}. 

Our final result is Theorem~\ref{elliptic-thm}, in which we consider an elliptic curve~$E$ over an algebraically closed field. We prove that it is possible to define a motivic integral Fourier transform $h(E)\to h(E)$ that has all the usual properties up to a factor~$2$ (as made precise in Def.~\ref{mot-def}), and that in general this factor~$2$ cannot be eliminated.


\section{Divided powers on Chow homology}
\label{PD-gen-sec}

Let $X$ be a quasi-projective scheme over a field~$k$. For $d\geq 1$ we define the $d$th symmetric power $\Sym^d(X)$ of~$X$ (over~$k$) as the quotient of $X^d$ by the natural action of the symmetric group~$S_d$. (For the existence of the quotient as a scheme, see \cite{DG}, III, \S~2 or \cite{Mumford}.) For a morphism $a\colon X\to Y$ between quasi-projective $k$-schemes we have an induced morphism $\Sym^d(a) \colon \Sym^d(X)\to \Sym^d(Y)$. If $a$ is a closed embedding then so is $\Sym^d(a)$. (Indeed, it suffices to check this for affine schemes. Then the assertion follows from the remark that a surjection of $k$-vector spaces $V\to W$ induces a surjection on symmetric tensors.)

We recall some elementary, and probably well-known, facts.

\begin{lem}\label{degree-sym-pr-lem}
Let $X$ be a quasi-projective scheme over~$k$.

\textup{(i)} The quotient morphism $q_{d,X} \colon X^d \to \Sym^d(X)$ is finite and $\Sym^d(X)$ is again quasi-projective. 

\textup{(ii)} Assume that $X$ is equidimensional of dimension $n>0$. Then $X^d$ and $\Sym^d(X)$ are equidimensional of dimension~$dn$, and there exists a dense open subset in $\Sym^d(X)$ over which $q_{d,X}$ is \'etale of degree~$d!$.

\textup{(iii)} Assumptions as in~\textup{(ii)}. For nonnegative integers $d_1,\ldots,d_r$ with $d_1+\cdots+d_r=d$, the natural map
$$
\a_{d_1,\ldots,d_r}\colon \Sym^{d_1}(X)\times\cdots\times\Sym^{d_r}(X)\to \Sym^d(X)
$$
is finite, and there is a dense open subset in $\Sym^d(X)$ over which it is \'etale of degree $d!/d_1!\, d_2!\, \cdots d_r!$. For $d$, $e \geq 1$, the natural map $\Sym^d\bigl(\Sym^e(X)\bigr)\to \Sym^{de}(X)$ is finite, and there is a dense open subset in $\Sym^{de}(X)$ over which it is \'etale of degree $(de)!/d!(e!)^d$.
\end{lem}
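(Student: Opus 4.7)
The uniform strategy is to view each map in the statement as the quotient of a power of~$X$ by a finite subgroup of a symmetric group, and to exhibit a dense open subset on which that subgroup acts freely; general properties of quotients by finite groups then give finiteness, quasi-projectivity, and the claimed \'etale degrees. For~(i), since $X$ is quasi-projective and $S_d$ acts on $X^d$, one may (after choosing an $S_d$-linearized ample line bundle) embed $X^d$ equivariantly into a projective $S_d$-representation, and the cited results from \cite{DG} and \cite{Mumford} then produce $\Sym^d(X)=X^d/S_d$ as a quasi-projective scheme with finite quotient map~$q_{d,X}$.

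For~(ii), equidimensionality of $X^d$ is immediate and passes to $\Sym^d(X)$ because $q_{d,X}$ is finite and surjective; both have dimension~$dn$. Let $U\subset X^d$ be the complement of the union of the partial diagonals $\{x_i=x_j\mid i\ne j\}$. Each such diagonal is closed of dimension $(d-1)n$, strictly smaller than $dn$ since $n>0$, so $U$ is open and dense in $X^d$, is $S_d$-stable, and carries a free $S_d$-action. Standard \'etale descent for free actions of a finite group then shows that the image $V := q_{d,X}(U)$ is open in $\Sym^d(X)$ and that $U\to V$ is \'etale of degree~$d!$; denseness of $V$ follows from surjectivity of $q_{d,X}$ (or from the dimension estimate for its complement).

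Part~(iii) is a variation on the same theme. Writing $d=d_1+\cdots+d_r$, the natural projection identifies $\Sym^{d_1}(X)\times\cdots\times\Sym^{d_r}(X)$ with the quotient of $X^d$ by the Young subgroup $H:=S_{d_1}\times\cdots\times S_{d_r}\subset S_d$, and under this identification $\alpha_{d_1,\ldots,d_r}$ becomes the further quotient by the coset space~$S_d/H$; over the dense open $V$ of~(ii) the $S_d$-action is free, yielding an \'etale map of degree $[S_d:H]=d!/(d_1!\cdots d_r!)$. The map $\Sym^d(\Sym^e(X))\to\Sym^{de}(X)$ is treated identically, with $X^{de}$ replacing $X^d$ and the wreath product $S_e\wr S_d=S_e^d\rtimes S_d\subset S_{de}$ replacing~$H$; its index in $S_{de}$ is $(de)!/\bigl(d!(e!)^d\bigr)$, giving the claimed degree on the locus where all $de$ coordinates are distinct.

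The only thing to watch during execution is purely bookkeeping: one must verify that each group-theoretic identification of a domain as a quotient of a power of~$X$ is compatible with the map in the statement, and that the candidate \'etale locus on the domain really maps onto a dense open subset of the target (for which the dimension estimate in~(ii), applied to the larger partial-diagonal loci, suffices). No deeper obstacle appears.
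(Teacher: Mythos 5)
Your proposal is correct and follows essentially the same route as the paper: quotient by a finite group for (i), the complement of the big diagonals with its free $S_d$-action for (ii), and the factorization of $q_{d,X}$ through the quotients by the Young subgroup $S_{d_1}\times\cdots\times S_{d_r}$ (resp.\ the wreath product $S_e\wr S_d$) for (iii). The only cosmetic difference is that for quasi-projectivity of the quotient the paper descends an ample bundle (showing $\otimes_{g}\,g^*L$ is pulled back from the quotient and remains ample) rather than invoking an equivariant projective embedding, but both reduce to the same cited facts.
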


\begin{proof}
(i) Let $X$ be a quasi-projective scheme over~$k$. Let $q \colon X \to Y$ be the quotient of~$X$ by a finite group~$G$. To see that $q$ is finite one reduces to the situation that $X = \Spec(A)$ and $Y = \Spec(B)$ with $B = A^G$. Then $A$ is integral over~$B$; see \cite{DG}, III, \S~2, 6.1. But also $A$ is a $B$-algebra of finite type, as it is even of finite type over~$k$. This implies that $A$ is finitely generated as a $B$-module. 

To see that the quotient is again quasi-projective, take an ample bundle $L$ on~$X$. Then $\otimes_{g \in G}\; g^*L$ is of the form $q^* M$ for some bundle~$M$ on~$Y$. Because $q$ is finite, the fact that $q^* M$ is ample implies that $M$ is ample. 

(ii) The fact that $X^d$ is equidimensional of dimension~$dn$ is standard, and the corresponding fact for $\Sym^d(X)$ follows from the fact that $q_{d,X}$ is finite and surjective. Next let $U \subset X^d$ be the complement of all big diagonals, i.e., the open subset of points $(x_1,\ldots,x_d)$ with $x_i \neq x_j$ for $i \neq j$. Then $U$ is open and dense in~$X^d$ and $S_d$ acts freely on~$U$. The quotient $V := U/S_d$ is an open dense subscheme of $\Sym^d(X)$ and the quotient morphism is finite \'etale of degree~$d!$ over~$V$.

(iii) Finiteness of the required maps follows from (i). Again let $U \subset X^d$ be the complement of all big diagonals. The quotient morphism $U \to U/S_d$ is finite \'etale of degree~$d!$ and it factors as the composition
$$
U \xrightarrow{\ q_{1}\ } U/(S_{d_1} \times \cdots \times S_{d_r}) \xrightarrow{\alpha_{d_1,\ldots,d_r}} U/S_d
$$
with $q_1$ finite \'etale of degree $d_1!\cdots d_r!$. Moreover, $U/S_d$ is a dense open subscheme of $\Sym^d(X)$ and $U/(S_{d_1} \times \cdots \times S_{d_r})$ is its inverse image in $\Sym^{d_1}(X) \times \cdots \times \Sym^{d_r}(X)$ under $\alpha_{d_1,\ldots,d_r}$. Hence $\alpha_{d_1,\ldots,d_r}$ is finite \'etale of degree $d!/d_1!\cdots d_r!$ over $U/S_d$. For the map $\Sym^d\bigl(\Sym^e(X)\bigr)\to \Sym^{de}(X)$ the argument is similar and we omit the details.
\end{proof}

\begin{rem}\label{flatness-rem}
A general fact that we shall frequently use is the following. Suppose $X$ and $Y$ are irreducible noetherian schemes that are both regular and with $\dim(X) = \dim(Y)$. Then any quasi-finite morphism $X \to Y$ is flat. See for instance \cite{Matsumura}, Thm.~23.1.

As an example of an application, suppose $C$ is a curve over~$k$. Then the symmetric powers $\Sym^m(C)$, for which we shall later use the notation~$C^{[m]}$, are regular. Combining the fact just mentioned with the Lemma, we find that the addition map 
$$
\a_{d_1,\ldots,d_r}\colon \Sym^{d_1}(C)\times\cdots\times\Sym^{d_r}(C)\to \Sym^d(C)
$$
is finite flat of degree $d!/d_1!\, d_2!\, \cdots d_r!$.
\end{rem}

Let $X$ be a quasi-projective scheme over~$k$. If $Z \sub X$ is a closed subscheme that is equidimensional of dimension $m>0$ then $\Sym^d(Z)$ can be viewed as a closed subscheme of $\Sym^d(X)$, and we define 
$$
\gamma_d(Z) := \bigl[\Sym^d(Z)\bigr] \in \CH_{dm}\bigl(\Sym^d(X)\bigr)
$$ 
to be its fundamental class (as in \cite{Fulton}, Section~1.5). For $d=0$ we have $\Sym^0(X)=\Spec(k)$ and we define $\gamma_0(Z) := \bigl[\Sym^0(X)\bigr] = \bigl[\Spec(k)\bigr]$. 

We want to extend the maps $\gamma_d$ to polynomial maps $\CH_m(X) \to \CH_{dm}\bigl(\Sym^d(X)\bigr)$. Modulo torsion this is straightforward, using the formula
\begin{equation}\label{gamma-power-for}
d!\, \gamma_d(Z)=q_{d,X*}[Z^d]
\end{equation}
that follows from (ii) of Lemma~\ref{degree-sym-pr-lem}. Note that the right-hand side of \eqref{gamma-power-for} is just the $d$th Pontryagin power of the cycle $[Z]$, where we define the Pontryagin product 
$$
\CH_*\bigl(\Sym^{d_1}(X)\bigr) \times \CH_*\bigl(\Sym^{d_2}(X)\bigr) \to \CH_*\bigl(\Sym^{d_1+d_2}(X)\bigr)
$$
by setting $x * y := (\a_{d_1,d_2})_* (x \times y)$.

To take torsion classes into account we have to proceed with more care. Let $\zeta = \sum_{j=1}^r\,  n_j [Z_j]$ be a cycle on $X$, where the $n_j$ are integers and the $Z_j$ are mutually distinct closed subvarieties of positive dimensions (possibly unequal). For $d \geq 0$ define $\gamma_d(\zeta) \in \CH_*\bigl(\Sym^d(X)\bigr)$ by
\begin{equation}\label{gammad-def}
\gamma_d(\zeta) := \sum_{d_1 + \cdots + d_r = d}\; n_1^{d_1} \cdots n_r^{d_r} \cdot \gamma_{d_1}(Z_1) * \cdots * \gamma_{d_r}(Z_r)\, .
\end{equation}
Writing $\mathcal{Z}_m(X)$ for the space of cycles of dimension~$m$ and setting
$\mathcal{Z}_{> 0}(X) := \oplus_{m>0}\, \mathcal{Z}_m(X)$, this defines maps
$$
\gamma_d \colon \mathcal{Z}_{> 0}(X) \to \CH_*\bigl(\Sym^d(X)\bigr)\, .
$$

\begin{lem}\label{PD-additivity-lem}
\textup{(i)} If $Z \subset X$ is a closed subvariety with $\dim(Z) >0$ and $d_1,\ldots,d_t$ are nonnegative integers with $d_1+\cdots+d_t = d$ then
$$
\gamma_{d_1}(Z) * \cdots * \gamma_{d_t}(Z) = \frac{d!}{d_1!\, d_2!\, \cdots d_t!} \cdot \gamma_d(Z)\, .
$$

\textup{(ii)} Let $\zeta_1,\ldots,\zeta_t$ be cycles in $\mathcal{Z}_{> 0}(X)$, 
and let $\zeta = \sum_{i=1}^t\,  \zeta_i$. Then 
\begin{equation}\label{gammad-additivity}
\gamma_d(\zeta) = \sum_{d_1 + \cdots + d_t = d}\; \gamma_{d_1}(\zeta_1) * \cdots * \gamma_{d_t}(\zeta_t)\, .
\end{equation}

\textup{(iii)} If $i\colon V\hookrightarrow X$ is a closed immersion and $\zeta$ is a cycle of dimension $m>0$ on~$V$ then
$$
\gamma_d(i_*\zeta) = \Sym^d(i)_*\bigl(\gamma_d(\zeta)\bigr)\, .
$$
(Here $\gamma_d(\zeta) \in \CH_{dm}\bigl(\Sym^d(V)\bigr)$.)

\textup{(iv)} Let $V\sub X$ be a closed subscheme, equidimensional of positive dimension. Then
$$ 
\gamma_d([V]) = \bigl[\Sym^d(V)\bigr]\, ,
$$ 
where we view $\Sym^d(V)$ as a closed subscheme of $\Sym^d(X)$.
\end{lem}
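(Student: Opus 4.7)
The plan is to prove the four parts in the stated order; part~(i) supplies the multiplicative identity from which (ii)--(iv) follow by standard manipulations.

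For (i), Lemma~\ref{degree-sym-pr-lem}(iii) says that the map
$\alpha_{d_1,\ldots,d_t}\colon \Sym^{d_1}(Z)\times\cdots\times\Sym^{d_t}(Z) \to \Sym^d(Z)$
is finite and surjective, generically \'etale of degree $d!/(d_1!\cdots d_t!)$. By the definition of proper pushforward on cycles (\cite{Fulton}, \S1.4) this gives
$(\alpha_{d_1,\ldots,d_t})_*\bigl[\Sym^{d_1}(Z)\times\cdots\times\Sym^{d_t}(Z)\bigr] = \tfrac{d!}{d_1!\cdots d_t!}\bigl[\Sym^d(Z)\bigr]$,
which is the desired identity since the Pontryagin product is by definition iterated $\alpha$-pushforward of external products.

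For (ii), gather the distinct subvarieties $\{W_k\}$ occurring in any $\zeta_i$ and write $\zeta_i = \sum_k n_{i,k}[W_k]$, so $\zeta = \sum_k \bigl(\sum_i n_{i,k}\bigr)[W_k]$. Expand both sides of \eqref{gammad-additivity} using \eqref{gammad-def}: the right-hand side becomes a sum of Pontryagin products indexed by arrays $(e_{i,k})$ of nonnegative integers with $\sum_k e_{i,k} = d_i$ for each~$i$. Using (i) to collapse $\gamma_{e_{1,k}}(W_k)*\cdots*\gamma_{e_{t,k}}(W_k) = \tfrac{e_k!}{\prod_i e_{i,k}!}\gamma_{e_k}(W_k)$ with $e_k := \sum_i e_{i,k}$, and applying the multinomial theorem to $\bigl(\sum_i n_{i,k}\bigr)^{e_k}$, the right-hand side rearranges into the defining expression \eqref{gammad-def} for $\gamma_d(\zeta)$. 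Part (iii) reduces by \eqref{gammad-def}, linearity, and the compatibility of $\Sym^d(i)_*$ with Pontryagin products (from functoriality of $\Sym^d$ applied to the squares defining $\alpha$) to the case $\zeta = [Z]$ with $Z \subset V$ a subvariety; there it is the tautology that $\Sym^d(i)\colon \Sym^d(V) \hookrightarrow \Sym^d(X)$, a closed immersion by the remark preceding Lemma~\ref{degree-sym-pr-lem}, sends $[\Sym^d(Z)]$ to $[\Sym^d(Z)]$.

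Part (iv) is the main content. Let $V_1,\ldots,V_r$ be the irreducible components of $V$, all of dimension $m = \dim(V) > 0$, so $[V] = [V_1] + \cdots + [V_r]$ as cycles. Applying (ii) and unravelling the Pontryagin products gives
\[
\gamma_d([V]) = \sum_{d_1+\cdots+d_r=d}\, (\alpha_{d_1,\ldots,d_r})_* \bigl[\Sym^{d_1}(V_1) \times \cdots \times \Sym^{d_r}(V_r)\bigr].
\]
The irreducible components of $\Sym^d(V) = V^d/S_d$ are in bijection with $S_d$-orbits of functions $\{1,\ldots,d\} \to \{1,\ldots,r\}$, i.e., with multi-indices $(d_1,\ldots,d_r)$ summing to $d$; the component indexed by $(d_1,\ldots,d_r)$ is the image of $\Sym^{d_1}(V_1)\times\cdots\times\Sym^{d_r}(V_r)$ under $\alpha_{d_1,\ldots,d_r}$, and this map is birational onto its image because a general point of the image is a multiset of $d$ points supported off all pairwise intersections $V_i \cap V_j$ and therefore admits a unique preimage. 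Each pushforward on the right is thus the fundamental class of the corresponding component of $\Sym^d(V)$, and summing yields $\bigl[\Sym^d(V)\bigr]$. The only genuinely non-formal step is this birationality/component analysis in~(iv); everything else is multinomial bookkeeping.
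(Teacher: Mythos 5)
Parts (i)--(iii) are correct and follow essentially the same route as the paper: (i) is the generic-degree computation from Lemma~\ref{degree-sym-pr-lem}(iii), (ii) is the multinomial rearrangement after collapsing repeated factors via (i), and (iii) is the formal compatibility of $\Sym^d(i)_*$ with Pontryagin products.

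Part (iv) --- which you rightly single out as the main content --- has a genuine gap. You start from $[V]=[V_1]+\cdots+[V_r]$, but the statement concerns an arbitrary equidimensional closed \emph{subscheme}, whose fundamental class is $\sum_j m_j[V_j]$ with $m_j$ the length of $\mathcal{O}_{V,V_j}$; your decomposition silently assumes $V$ is generically reduced along each component. This is not a cosmetic restriction: (iv) is invoked in the Proposition immediately following, applied to the scheme-theoretic fibres $V_y$ of a subvariety of $X\times\P^1$ over points of $\P^1$, and those fibres are precisely where non-reduced structure shows up, so the reduced case does not suffice. In addition, your component/birationality analysis of $\Sym^d(V)$ is fragile over a general base field: a product $V_{j_1}\times\cdots\times V_{j_d}$ of irreducible (even reduced) schemes need not be irreducible, nor reduced over a non-perfect field, so the asserted bijection between components of $\Sym^d(V)$ and multi-indices, and the multiplicity-one claim for each pushforward, both need justification you do not supply. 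The paper's proof sidesteps all of this: by (iii) one reduces to the case $V=X$, and then both sides lie in $\CH_{dm}\bigl(\Sym^d(V)\bigr)$ with $\Sym^d(V)$ equidimensional of dimension $dm$, so this group is the free abelian group on the top-dimensional components and in particular torsion-free; it therefore suffices to verify the identity after multiplying by $d!$, where it becomes $q_{d,V*}[V^d]=d!\,\bigl[\Sym^d(V)\bigr]$, i.e.\ \eqref{gamma-power-for} combined with the degree-$d!$ statement of Lemma~\ref{degree-sym-pr-lem}(ii). You should either adopt that argument or supply the missing multiplicity bookkeeping for non-reduced $V$ and reducible products.
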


Effectively, (ii) means that \eqref{gammad-def} is still valid if we drop the assumption that the $Z_j$ are mutually distinct.

\begin{proof} (i) This follows from (iii) of Lemma~\ref{degree-sym-pr-lem} applied to~$Z$.

(ii) Let $Z_1,\ldots,Z_r$ be the subvarieties of $X$ that occur as component in some $\zeta_i$. Write $\zeta_i = \sum_{j=1}^r\, n_{ij} [Z_j]$ and let $N_j := \sum_{i=1}^t\, n_{ij}$, so that $\zeta = \sum_{j=1}^r\, N_j [Z_j]$. Then the RHS of \eqref{gammad-additivity} equals
$$
\sum_{\delta}\; n^\delta \cdot \prod_{j=1}^r \prod_{i=1}^t \gamma_{\delta_{ij}}(Z_j)\, ,
$$
where $\delta$ runs over all matrices $(\delta_{ij})$ of size $t \times r$ with $\delta_{ij} \geq 0$ and $\sum \delta_{ij} = d$, and where we write $n^\delta$ for $\prod_{ij}\, n_{ij}^{\delta_{ij}}$. 

Given $\delta$, write $e_j(\delta) := \sum_{i=1}^t\, \delta_{ij}$. Fix nonnegative integers $e_1,\ldots,e_r$ with $e_1+\cdots + e_r =d$ and sum over all matrices~$\delta$ such that $e_j(\delta) = e_j$ for all~$j$. Using~(i) we find that this sum equals
$$
\sum_{\delta \text{\ with\ } \e_j(\delta)=e_j}\; \prod_{j=1}^r\; \frac{n_{1j}^{\delta_{1j}} \cdots n_{tj}^{\delta_{tj}} \cdot e_j!}{\delta_{1j}! \cdots \delta_{tj}!} \cdot \gamma_{e_j}(Z_j) = \prod_{j=1}^r N_j^{e_j}\cdot  \gamma_{e_j}(Z_j)\, .
$$
Summing over all $(e_1,\ldots,e_r)$ we get the LHS of \eqref{gammad-additivity}.

(iii) This follows immediately from the definitions, using that the push-forward via the map $\Sym^d(i) \colon \Sym^d(V) \to \Sym^d(X)$ respects Pontryagin products.

(iv) By (iii) it is enough to check this in the case $V=X$. But in this case we have to check the equality in $\CH_{dm}\bigl(\Sym^d(V)\bigr)$, where $m=\dim V$. Since $\Sym^d(V)$ is equidimensional of dimension~$dm$, this group has no torsion, so the equality follows from \eqref{gamma-power-for} and (ii) of Lemma~\ref{degree-sym-pr-lem}.
\end{proof}
 
\begin{prop}
The maps $\gamma_d \colon \mathcal{Z}_{> 0}(X) \to \CH_*\bigl(\Sym^d(X)\bigr)$ factor modulo rational equivalence. 
\end{prop}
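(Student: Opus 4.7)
The plan is to exploit the multiplicativity of $\gamma_d$ under addition given by Lemma~\ref{PD-additivity-lem}(ii) to reduce the statement to a single principal rational equivalence, and then to run a specialization argument over~$\P^1$ using the scheme-theoretic formula of Lemma~\ref{PD-additivity-lem}(iv).

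First I bundle the $\gamma_d$ into the total map
$$
\Gamma\colon \mathcal{Z}_{>0}(X) \longrightarrow A := \prod_{d\ge 0} \CH_*\bigl(\Sym^d(X)\bigr),\qquad \Gamma(\zeta) := \sum_{d\ge 0} \gamma_d(\zeta).
$$
Part~(ii) of the lemma says precisely that $\Gamma$ is a homomorphism from the additive group $(\mathcal{Z}_{>0}(X),+)$ to the multiplicative group of units in~$A$ under the Pontryagin product~$*$ (the unit being $[\Spec(k)]$); note that $\Gamma(\zeta)$ is automatically invertible since its degree-zero part is $\gamma_0(\zeta) = [\Spec(k)] = 1$. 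Since rational equivalence on $\mathcal{Z}_{>0}(X)$ is generated by differences $\xi_+ - \xi_-$, where $\xi_+,\xi_-$ are the effective positive and negative parts of $\div_V(f)$ for an integral subvariety $V\subset X$ of dimension~$\geq 2$ and $f\in k(V)^*$, it suffices to show that $\gamma_d(\xi_+) = \gamma_d(\xi_-)$ in $\CH_{dm}\bigl(\Sym^d(X)\bigr)$ for every such pair and every $d\ge 1$.

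Next, realize $\xi_\pm$ as fibers of a flat family. Let $W\subset X\times\P^1$ be the closure of the graph of the rational map $V\dashrightarrow\P^1$ defined by~$f$. Then $W$ is integral of dimension $m+1\ge 2$, the second projection $W\to\P^1$ is flat, and the fibers $W_0, W_\infty$ are equidimensional of dimension~$m\ge 1$. Their fundamental cycle classes on~$X$ (via the birational first projection) are $\xi_+$ and~$\xi_-$, so Lemma~\ref{PD-additivity-lem}(iv) gives $\gamma_d(\xi_+) = \bigl[\Sym^d(W_0)\bigr]$ and $\gamma_d(\xi_-) = \bigl[\Sym^d(W_\infty)\bigr]$. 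To compare these, I form the relative symmetric product $\mathcal{Y} := W^{d/\P^1}/S_d \subset \Sym^d(X)\times\P^1$, where $W^{d/\P^1}$ denotes the $d$-fold fiber product of~$W$ over~$\P^1$. This $\mathcal{Y}$ is a closed subscheme of relative dimension~$dm$ over~$\P^1$, and its scheme-theoretic fibers $\mathcal{Y}_t$ are identified with $\Sym^d(W_t)$, so that the Gysin fiber pullbacks satisfy $i_t^!\,[\mathcal{Y}] = \bigl[\Sym^d(W_t)\bigr]$. Since $[0]$ and $[\infty]$ are rationally equivalent on~$\P^1$, the maps $i_0^!$ and $i_\infty^!$ agree on $\CH_*\bigl(\Sym^d(X)\times\P^1\bigr)$, yielding the desired equality $[\Sym^d(W_0)] = [\Sym^d(W_\infty)]$ in $\CH_{dm}\bigl(\Sym^d(X)\bigr)$.

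The main obstacle will be the identity $i_t^!\,[\mathcal{Y}] = [\Sym^d(W_t)]$ at the special points $t=0,\infty$: this amounts to showing that formation of the $S_d$-quotient commutes with passage to the fiber at these points. In characteristic zero this is essentially automatic, because $|S_d|=d!$ is a unit in~$k$. In positive characteristic one instead argues at the cycle level: verify that $\mathcal{Y}\to\P^1$ has no associated components supported over closed points of~$\P^1$ (so that it is flat), and then match $[\mathcal{Y}_t]$ with $[\Sym^d(W_t)]$ by comparison with the identity $(q_{d,X})_*[W_t^d] = d!\cdot\gamma_d([W_t])$ coming from~\eqref{gamma-power-for} and its analogue for $\mathcal{Y}$.
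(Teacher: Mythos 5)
Your argument follows essentially the same route as the paper's: reduce, via the multiplicativity of Lemma~\ref{PD-additivity-lem}(ii), to comparing $\gamma_d$ of the two fibres over $0$ and $\infty$ of a family in $X\times\P^1$, and then compare $[\Sym^d(W_0)]$ with $[\Sym^d(W_\infty)]$ by exhibiting both as Gysin fibres of the relative symmetric power over~$\P^1$. (The paper starts directly from Fulton's presentation of rational equivalence by subvarieties $V\subset X\times\P^1$ dominating $\P^1$; your passage through $\div_V(f)$ and the graph closure $W$ lands in exactly that situation.) The one point you leave open is precisely the point at which the paper leans on a citation: that for a flat family the fibre of the relative symmetric power is the symmetric power of the fibre, which is \cite{SGA4-17}, 5.5.2.4--5.5.2.7. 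Your characteristic-zero remark is fine (the Reynolds operator makes $S_d$-invariants commute with any base change), but your positive-characteristic fallback is not yet a proof. As written, comparing $[\mathcal{Y}_t]$ with $[\Sym^d(W_t)]$ ``via \eqref{gamma-power-for}'' only produces the identity after multiplication by $d!$, and the whole point of the integral statement is that one may not divide by $d!$ in an arbitrary Chow group; moreover the flatness of $\mathcal{Y}\to\P^1$ that you propose to ``verify'' is itself not obvious (taking invariants need not preserve flatness in characteristic~$p$).

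The gap is repairable without SGA4, and it is worth recording how, since it uses the same device as the paper's own proof of Lemma~\ref{PD-additivity-lem}(iv): one does not need $\mathcal{Y}_t\cong\Sym^d(W_t)$ as schemes, only as cycles, and the comparison takes place in $\CH_{dm}$ of an equidimensional scheme of dimension $dm$, which is a free abelian group, so the factor $d!$ \emph{can} be cancelled there. Concretely: all components of $\mathcal{Y}$ dominate $\P^1$, so $i_t^![\mathcal{Y}]$ is the cycle $\sum_i m_i[(\mathcal{Y}_i)_t]$ and $i_0^![\mathcal{Y}]\sim i_\infty^![\mathcal{Y}]$ needs no flatness at all; since invariants commute with localization, $\mathcal{Y}_\eta=\Sym^d(W_\eta)$ over the generic point, whence $q_*[W^{d/\P^1}]=d!\cdot[\mathcal{Y}]$ by Lemma~\ref{degree-sym-pr-lem}(ii); compatibility of the Gysin map with proper push-forward then gives $d!\cdot i_t^![\mathcal{Y}]=(q_t)_*[(W_t)^d]=d!\cdot[\Sym^d(W_t)]$, and one cancels $d!$ in the torsion-free group $\mathcal{Z}_{dm}$. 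With that step supplied, your proof is complete and is, in substance, the paper's proof with the SGA4 input replaced by a cycle-level argument.
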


\begin{proof}
Let $V\sub X\times\P^1_k$ be a closed subvariety of dimension $>1$, such that the projection $f\colon V\to\P^1$ is dominant. Then $f$ is surjective and flat of relative dimension $>0$. If $y \in \P^1(k)$ is a $k$-rational point of~$\P^1$, write $V_y$ for the (scheme-theoretic) fibre of~$f$ over~$y$, viewed as a closed subscheme of~$X$. Note that $V_y$ is non-empty with $\dim(V_y) = \dim(f) > 0$. 
We claim that in this situation we have $\gamma_d[V_0] = \gamma_d[V_\infty]$. 
To prove this, consider the relative symmetric power $\Sym^d(V/\P^1)$ (see \cite{SGA4-17}, 5.5). We can view it as a closed subscheme in $\Sym^d(X\times\P^1/\P^1) = \Sym^d(X)\times\P^1$. The flatness of $f$ implies that $\Sym^d(V_y)$ is the fibre of $\Sym^d(V/\P^1)$ over the point~$y$; see \cite{SGA4-17}, 5.5.2.5 and 5.5.2.7, or \cite{BLR}, Section~9.3. Hence, by (iv) of Lemma~\ref{PD-additivity-lem}, 
$$
\gamma_d[V_0] - \gamma_d[V_\infty] = \phi_*\bigl([\Sym^d(V/\P^1)_0] - [\Sym^d(V/\P^1)_\infty]\bigr)\, ,
$$
where $\phi\colon \Sym^d(V/\P^1)\to\Sym^d(X)$ is the projection.
Since $\Sym^d(V/\P^1)$ is flat over $\P^1$ (see \cite{SGA4-17}, 5.5.2.4), the latter class is rationally equivalent to zero by \cite{Fulton}, Theorems 1.7 and~1.4. 

By Lemma~\ref{PD-additivity-lem}(ii), $H := \cap_{d \geq 1}\, \Ker(\gamma_d)$ is a linear subspace of $\mathcal{Z}_{>0}(X)$, and for $\zeta$, $\zeta^\prime \in \mathcal{Z}_{>0}(X)$ we have $\gamma_d(\zeta) = \gamma_d(\zeta^\prime)$ for all~$d$ if and only if $\zeta - \zeta^\prime \in H$. As just shown, for all subvarieties $V\sub X\times\P^1$ of dimension $>1$, dominant over $\P^1$, and all $d\geq 0$ we have $\gamma_d[V_0]=\gamma_d[V_\infty]$; hence, $[V_0]-[V_\infty]\in H$. It is known that the subspace of $\mathcal{Z}_{>0}(X)$ of classes that are rationally equivalent to zero is spanned by classes of this form. (See \cite{Fulton}, Prop.~1.6.) Hence this subspace is contained in~$H$ and we are done.
\end{proof}

Thus, for every quasi-projective $k$-scheme $X$ we have well-defined maps
$$
\gamma_d \colon \CH_{>0}(X)\to \CH_*\bigl(\Sym^d(X)\bigr)\, .
$$
Let us check the compatibility of these maps with push-forwards.

\begin{prop}\label{gamma-fun-prop} 
Let $f\colon X\to Y$ be a proper morphism of quasi-projective $k$-schemes.
Then for all $x\in\CH_{>0}(X)$ and all $d\geq 0$ one has
$$
\Sym^d(f)_*\bigl(\gamma_d(x)\bigr) = \gamma_d(f_*x)\, .
$$
\end{prop}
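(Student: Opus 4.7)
The plan is to reduce the statement to the case $x=[Z]$ for a single positive-dimensional closed subvariety $Z\subset X$, and then carry out a direct geometric computation from the definition of proper push-forward.

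For the reduction, I would pick a representative $x = \sum_{j=1}^r n_j [Z_j]$ with distinct subvarieties $Z_j$ of positive dimension and expand $\gamma_d(x)$ via formula \eqref{gammad-def}. The compatibility I need is that $\Sym^d(f)_*$ turns a Pontryagin product on $\Sym^*(X)$ into a Pontryagin product on $\Sym^*(Y)$; this is a formal consequence of the identity $\Sym^d(f)\circ \a_{d_1,\ldots,d_r} = \a_{d_1,\ldots,d_r}\circ\bigl(\Sym^{d_1}(f)\times\cdots\times\Sym^{d_r}(f)\bigr)$, once one knows that $\Sym^d(f)$ is proper (obtained by descending properness of $f^d$ along the finite surjection $q_{d,X}\colon X^d\to\Sym^d(X)$). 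Combining this with the obvious homogeneity $\gamma_d(n\eta)=n^d\gamma_d(\eta)$, visible directly from \eqref{gammad-def}, and the extended form of Lemma~\ref{PD-additivity-lem}(ii) noted in the remark after its statement, reduces the theorem to proving $\Sym^d(f)_*[\Sym^d(Z)] = \gamma_d(f_*[Z])$ for a single closed subvariety $Z\subset X$ of positive dimension (using Lemma~\ref{PD-additivity-lem}(iv) to identify $\gamma_d([Z])$ with $[\Sym^d(Z)]$).

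For the single-subvariety case, let $Z'\subset Y$ be the reduced image $f(Z)_{\mathrm{red}}$. If $\dim Z' < \dim Z$, then $f_*[Z]=0$, and the map $\Sym^d(f|_Z)$ factors through $\Sym^d(Z')$, whose dimension $d\dim Z'$ is strictly smaller than $\dim \Sym^d(Z) = d\dim Z$; both sides of the desired equality then vanish by the definition of proper push-forward of cycles. If $\dim Z' = \dim Z$, let $e$ denote the generic degree of $f|_Z$, so that $f_*[Z]=e[Z']$ and $\gamma_d(f_*[Z]) = e^d[\Sym^d(Z')]$. To match this with $\Sym^d(f|_Z)_*[\Sym^d(Z)]$ I would consider the commutative square with horizontal arrows $(f|_Z)^d\colon Z^d\to(Z')^d$ and $\Sym^d(f|_Z)\colon \Sym^d(Z)\to\Sym^d(Z')$, and vertical arrows $q_{d,Z}$, $q_{d,Z'}$ (each of generic degree $d!$ by Lemma~\ref{degree-sym-pr-lem}(ii)). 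The top map has generic degree $e^d$, so multiplicativity of generic degrees along the two paths through the square forces the bottom map to have generic degree $e^d$ as well; since source and target are equidimensional of the same dimension $d\dim Z$, this yields $\Sym^d(f|_Z)_*[\Sym^d(Z)] = e^d[\Sym^d(Z')]$, as required.

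The only delicate point is the bookkeeping of the first step: after push-forward the images $f(Z_j)$ can coincide, and some $f_*[Z_j]$ may vanish, so $f_*x$ is not a priori an expression in \emph{distinct} subvarieties as \eqref{gammad-def} formally requires. The extended form of Lemma~\ref{PD-additivity-lem}(ii), together with homogeneity of $\gamma_d$ in the scalar coefficient, absorbs both phenomena cleanly and lets the case-by-case single-subvariety computation propagate to the general $x$.
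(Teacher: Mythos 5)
Your proof is correct and follows essentially the same route as the paper: reduce via the additivity/compatibility properties of Lemma~\ref{PD-additivity-lem} to a single positive-dimensional subvariety, dispose of the dimension-dropping case, and in the equidimensional case compare degrees through the square formed by $(f|_Z)^d$, $\Sym^d(f|_Z)$ and the quotient maps $q_{d,Z}$, $q_{d,Z'}$. The paper phrases the last step as: apply \eqref{gamma-power-for} and divide by $d!$, which is legitimate because $\CH_{md}\bigl(\Sym^d(Y)\bigr)$ is torsion-free --- this is the cycle-theoretic version of your ``multiplicativity of generic degrees'' and is slightly safer when $\Sym^d(Z)$ happens to be reducible.
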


\begin{proof}
By (iii) and (iv) of Lemma \ref{PD-additivity-lem} it suffices to show that
\begin{equation}\label{gamma-fun-formula}
\Sym^d(f)_*\bigl[\Sym^d(X)\bigr]=\gamma_d\bigl(f_*[X]\bigr)\, ,
\end{equation} 
where we may further assume that $X$ is irreducible and that $Y = f(X)$ is the scheme-theoretic image of~$f$. If $\dim(Y) < \dim(X)$ then both sides of \eqref{gamma-fun-formula} are zero. Hence, we only have to consider the case when $X$ and $Y$ are irreducible of the same dimension $m>0$. In this case \eqref{gamma-fun-formula} readily follows from \eqref{gamma-power-for} because $\CH_{md}(\Sym^d(Y))$ has no torsion.
\end{proof}

Let $k$ be a field, and let $(M_n)_{n\ge 0}$ be a commutative graded monoid in the category of quasi-projective $k$-schemes. This means that each $M_n$ is a quasi-projective $k$-scheme, and that we have product maps $\mu_{m,n} \colon M_m\times M_n \to M_{m+n}$ satisfying commutativity and associativity. We further assume that there is a point $e\in M_0(k)$ which is a unit for these products and that all product maps $\mu_{m,n}$ are proper. Then we can define the Pontryagin product on 
$$
\CH_*(M_{\bullet}) := \bigoplus_{n\ge 0}\, \CH_*(M_n)
$$
by the standard formula: $x * y = (\mu_{m,n})_*\bigl(x \times y\bigr)$ for $x \in \CH_*(M_m)$ and $y \in \CH_*(M_n)$. Using the operations $\gamma_d$ we can define a PD-structure on the ideal $\CH_{>0}(M_{\bullet})$. Before we state and prove the main result, note that the iterated multiplication morphism $\mu_{n,n,\ldots,n} \colon M_n^d\to M_{dn}$ factors through a proper map $p_d \colon \Sym^d(M_n)\to M_{dn}$. For $d=0$ we set $p_0=e \colon\Sym^0(M_n) = \Spec(k) \to M_0$.

\begin{thm}\label{PD-monoid-thm} 
For a commutative graded monoid $(M_n)$ with identity and with proper product morphisms, the maps
$$
\gamma^M_d\colon \CH_{>0}(M_n) \to \CH_*(M_{dn})
$$
given by $x\mapsto p_{d,*}\gamma_d(x)$ extend uniquely to a PD-structure $\{\gamma_d^M\}_{d \geq 0}$ on the ideal $\CH_{>0}(M_{\bullet})\sub \CH_*(M_{\bullet})$. If $(f_n \colon M_n\to M'_n)$ is a homomorphism of two such monoids such that all morphisms $f_n$ are proper then the push-forward map $(f_\bullet)_* \colon \CH_*(M_{\bullet}) \to \CH_*(M'_{\bullet})$ is a PD-homomorphism.
\end{thm}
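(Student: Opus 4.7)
The plan is to use the operations $\gamma_d \colon \CH_{>0}(M_n) \to \CH_*(\Sym^d(M_n))$ from the preceding proposition, push them forward along the proper iterated multiplication $p_d$ to get $\gamma_d^M$ on each homogeneous piece, and then extend to all of $\CH_{>0}(M_\bullet) = \bigoplus_n \CH_{>0}(M_n)$ by imposing the PD-additivity formula on mixed-degree decompositions. Since the decomposition of an element of $\CH_{>0}(M_\bullet)$ into homogeneous components is unique, uniqueness of the PD-structure extending the given maps on each $\CH_{>0}(M_n)$ is automatic from the additivity axiom.

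The normalization $\gamma_0^M(x) = [e]$ and $\gamma_1^M(x) = x$ is immediate. To establish additivity $\gamma_d^M(x+y) = \sum_{i+j=d} \gamma_i^M(x) * \gamma_j^M(y)$ for $x, y \in \CH_{>0}(M_n)$ in a single homogeneous component, I would push forward the identity of Lemma~\ref{PD-additivity-lem}(ii) along $p_d$, using the key compatibility $p_{i+j} \circ \alpha_{i,j} = \mu_{in,jn} \circ (p_i \times p_j)$, which is a direct consequence of the associativity of the monoid product. Additivity for sums of elements of different monoidal degree then reduces to the very definition of the extension.

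Next, the product identity $\gamma_d^M(x) * \gamma_e^M(x) = \binom{d+e}{d} \gamma_{d+e}^M(x)$ and the composition identity $\gamma_d^M(\gamma_e^M(x)) = \tfrac{(de)!}{d!(e!)^d}\, \gamma_{de}^M(x)$ are first verified for subvariety classes $x = [Z]$. These follow by pushing forward Lemma~\ref{PD-additivity-lem}(i) and the analogous generic degree formula in Lemma~\ref{degree-sym-pr-lem}(iii) respectively, combined with the observation that the Chow groups of $\Sym^{d+e}(M_n)$ and $\Sym^{de}(M_n)$ are torsion-free in the relevant top dimensions. The extension of both identities to arbitrary classes in $\CH_{>0}(M_\bullet)$ then follows from the additivity axiom together with a Vandermonde-type combinatorial calculation.

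The scalar axiom $\gamma_d^M(a*x) = a^{*d} * \gamma_d^M(x)$ for $a \in \CH_*(M_\bullet)$ and $x \in \CH_{>0}(M_\bullet)$ is the most delicate part; by bilinear extension it reduces to subvariety classes $a = [A]$ with $A \subset M_m$ and $x = [Z]$ with $Z \subset M_n$. The essential input is a commutative diagram relating $p_d \circ \Sym^d(\mu_{m,n})$ to $\mu_{dm,dn} \circ (p_d \times p_d)$ via the canonical projection $\Sym^d(M_m \times M_n) \to \Sym^d(M_m) \times \Sym^d(M_n)$, combined with an application of Lemma~\ref{PD-additivity-lem}(iv) to the closed subscheme $\Sym^d(A \times Z)$. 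For the second assertion of the theorem, functoriality of the PD-structure under a proper homomorphism $(f_n)\colon (M_n) \to (M'_n)$ follows from Proposition~\ref{gamma-fun-prop} applied to each $f_n$ together with the compatibility $p'_d \circ \Sym^d(f_n) = f_{dn} \circ p_d$. I expect the main obstacle to lie in verifying the scalar axiom, specifically in carefully tracking the interaction of symmetric-power functoriality with the monoid product maps $\mu$ and their iterates $p_d$.
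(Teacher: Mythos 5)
Your proposal follows essentially the same route as the paper: define $\gamma_d^M = p_{d,*}\circ\gamma_d$ on each homogeneous piece, extend to mixed-degree classes by imposing the additivity formula (which also gives uniqueness), reduce the product and composition axioms to subvariety classes $[Z]$ via additivity, verify them there using Lemma~\ref{PD-additivity-lem}(i) and the degree computations of Lemma~\ref{degree-sym-pr-lem}(iii), and deduce functoriality from Proposition~\ref{gamma-fun-prop}. The one place you go beyond the paper's write-up is in singling out the scalar axiom $\gamma_d^M(a*x)=a^{*d}*\gamma_d^M(x)$ for a general ring element $a$ as the delicate step (the paper treats it as clear); your sketch via the compatibility of $p_d\circ\Sym^d(\mu_{m,n})$ with $\mu_{dm,dn}\circ(p_d\times p_d)$ is the right way to make it precise, provided you track the generic degree $d!$ of $\Sym^d(M_m\times M_n)\to\Sym^d(M_m)\times\Sym^d(M_n)$ against the identity $a^{*d}=d!\,\gamma_d^M(a)$ so that the factors cancel.
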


\begin{proof}
Given $x = \sum_{n\geq 0} x_n$ with $x_n \in \CH_{> 0}(M_n)$, zero for almost all~$n$, define
$$
\gamma_d^M(x) := \sum_{d_0 + d_1 + \cdots = d}\; \gamma_{d_0}^M(x_0) * \gamma_{d_1}^M(x_1) * \cdots 
$$
It is clear that $\gamma_d(\lambda x) = \lambda^d \cdot \gamma_d(x)$ and it easily follows from Lemma~\ref{PD-additivity-lem} that we have 
\begin{equation}\label{gammad(x+y)}
\gamma_d^M(x+y) = \sum_{i+j=d}\, \gamma_i^M(x) *\gamma_j^M(y)
\end{equation}
for all $x$, $y \in \CH_{>0}(M_{\bullet})$ and all $d \geq 0$. It remains to be shown that for all $x \in \CH_{> 0}(M_{\bullet})$ and all integers $d$, $e \geq 0$ we have the following relations:
\begin{equation}\label{PD-check-1}
\gamma_d^M(x) * \gamma_e^M(x) = \binom{d+e}{d}\cdot \gamma_{d+e}^M(x)
\end{equation}
and
\begin{equation}\label{PD-check-2}
\gamma_d^M\bigl(\gamma_e^M(x)\bigr) = \frac{(de)!}{d! (e!)^d}\cdot \gamma_{de}^M(x)\, .
\end{equation}
Moreover, because \eqref{gammad(x+y)} holds, it suffices to verify this for classes $x = [Z]$ with $Z \subset M_n$ a closed subvariety, $\dim(Z) >0$. In this case \eqref{PD-check-1} follows from Lemma~\ref{PD-additivity-lem}(i) and \eqref{PD-check-2} follows from the second assertion of 
Lemma \ref{degree-sym-pr-lem}(iii).

The second assertion of the theorem follows from Proposition \ref{gamma-fun-prop}.
\end{proof}

For example, if $X$ is a quasi-projective $k$-scheme then the above construction gives a PD-structure on the ideal $\CH_{>0}\bigl(\Sym^{\bullet}(X)\bigr) \sub \CH_*\bigl(\Sym^{\bullet}(X)\bigr)= \oplus_{d \geq 0}\; \CH_*\bigl(\Sym^d(X)\bigr)$.

The case where $M_n = \emptyset$ for all $n>0$ gives the following ungraded version of the theorem.

\begin{cor}\label{PD-ungraded-cor}
Let $M$ be a commutative monoid with identity in the category of quasi-projective $k$-schemes, such that the product morphism $\mu\colon M\times M\to M$ is proper. Let $p_d\colon \Sym^d(M)\to M$ be the morphism induced by the iterated multiplication map $M^d \to M$. Then the maps $\gamma_d^M \colon \CH_{>0}(M)\to \CH_*(M)$ defined by $x \mapsto p_{d,*}\gamma_d(x)$ define a PD-structure on the ideal $\CH_{>0}(M) \sub \CH_*(M)$.
\end{cor}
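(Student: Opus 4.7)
The plan is to deduce the corollary directly from Theorem~\ref{PD-monoid-thm} by viewing~$M$ as a commutative graded monoid concentrated in degree~$0$. This is exactly the special case mentioned in the paragraph preceding the corollary, so the work to be done is essentially bookkeeping.

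Concretely, I would set $M_0 := M$ and $M_n := \emptyset$ for all $n \geq 1$. The product $\mu_{0,0}$ is the given proper map $\mu \colon M \times M \to M$; for $m + n > 0$ the source $M_m \times M_n$ is empty, so $\mu_{m,n}$ is vacuous, and in particular proper. Commutativity, associativity, and the existence of the identity $e \in M(k) = M_0(k)$ follow immediately from the hypotheses on~$M$. Thus $(M_n)_{n \geq 0}$ satisfies the assumptions of Theorem~\ref{PD-monoid-thm}.

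Because $M_n = \emptyset$ for $n > 0$, the group $\CH_*(M_\bullet) = \bigoplus_{n \geq 0} \CH_*(M_n)$ reduces to $\CH_*(M_0) = \CH_*(M)$, and likewise the ideal $\CH_{>0}(M_\bullet)$ reduces to $\CH_{>0}(M)$. The Pontryagin product on $\CH_*(M_\bullet)$ is the one induced by~$\mu$, and the map $p_d$ of Theorem~\ref{PD-monoid-thm}, coming from the iterated multiplication $M_0^d \to M_0$, is precisely the map $p_d \colon \Sym^d(M) \to M$ in the statement of the corollary. Applying Theorem~\ref{PD-monoid-thm} to $(M_n)$ therefore yields a PD-structure on $\CH_{>0}(M) \subset \CH_*(M)$ whose operations are given by $x \mapsto p_{d,*}\gamma_d(x)$, which is exactly the assertion of Corollary~\ref{PD-ungraded-cor}. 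Since the proof is a direct specialization, there is no substantive obstacle.
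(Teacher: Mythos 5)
Your proposal is correct and matches the paper's own derivation: the text immediately preceding the corollary states that it is ``the case where $M_n = \emptyset$ for all $n>0$'' of Theorem~\ref{PD-monoid-thm}, which is precisely the specialization you carry out. The bookkeeping you supply (identifying $\CH_*(M_\bullet)$ with $\CH_*(M)$ and the graded $p_d$ with the ungraded one) is exactly what is implicit in the paper.
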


For example, if $A$ is an abelian variety over $k$ then this construction gives a PD-structure on $\CH_{>0}(A)\sub\CH_*(A)$.

\begin{cor} 
Assume that the field $k$ is algebraically closed. Then for an abelian variety $A$ over $k$ there is a canonical PD-structure (with respect to the Pontryagin product)
on the augmentation ideal in $\CH_*(A)$, i.e., the ideal generated by $\CH_{>0}(A)$ together with the $0$-cycles of degree~$0$.
\end{cor}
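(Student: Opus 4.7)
My plan is to extend the PD-structure of Corollary~\ref{PD-ungraded-cor} across the augmentation ideal, which decomposes as $I = \CH_{>0}(A) \oplus L$ with $L := \CH_0(A)_0$. Since $\CH_{>0}(A)$ is an ideal of $\CH_*(A)$ with $\CH_{>0}(A) * L \subset \CH_{>0}(A)$, and $L$ is a subring under the Pontryagin product, I would first construct a PD-structure on $L$ alone and then glue by setting $\gamma_d(x+a) := \sum_{i+j=d} \gamma_i(x) * \gamma_j(a)$ for $x \in \CH_{>0}(A)$ and $a \in L$.

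The mechanism for dividing by $d!$ on $L$ comes from the Albanese map $\mathrm{alb} \colon L \to A(k)$, $[p]-[q] \mapsto p-q$, with kernel $T(A)$. Writing $a = \sum n_i [p_i] \in L$ (so that $\sum n_i = 0$) and expanding the Pontryagin power, a direct computation yields
\[
\mathrm{alb}\bigl(a^{*d}\bigr) \;=\; d \cdot \mathrm{alb}(a) \cdot \Bigl(\sum_i n_i\Bigr)^{d-1} \;=\; 0
\]
for every $d \geq 2$, so $a^{*d} \in T(A)$. I then set $\gamma_1(a) := a$ and $\gamma_d(a) := a^{*d}/d!$ for $d \geq 2$, with the division taking place inside $T(A)$. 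The key claim is that $T(A)$ is a $\Q$-vector space, which renders the division unambiguous and the resulting $\gamma_d$ canonical. Torsion-freeness follows from Roitman's theorem (injectivity of $\mathrm{alb}$ on torsion subgroups); divisibility follows from divisibility of $L$ (every $0$-cycle of degree zero on $A$ is the push-forward of a degree-zero divisor on some smooth projective curve $C \to A$, and $\Jac(C)(\kbar)$ is divisible), divisibility of $A(k)$, and a short diagram chase on $0 \to T(A) \to L \to A(k) \to 0$ using Roitman again. The PD-identities on $L$ itself reduce to formal algebra inside the $\Q$-vector space $T(A)$, using $L * L \subset T(A)$.

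It remains to verify that the glued $\gamma_d$ define a PD-structure on all of~$I$. Well-definedness is immediate from the direct-sum decomposition. The additivity axiom $\gamma_d(y+y') = \sum \gamma_i(y) * \gamma_j(y')$ for $y, y' \in I$, as well as the multiplicativity, homogeneity, and composition axioms, unfold via the decomposition $y = x + a$ and reduce to the corresponding axioms already available on $\CH_{>0}(A)$ and on~$L$; the key combinatorial step in the product axiom is a Vandermonde rearrangement of indices. The main obstacle I anticipate is the proof that $T(A)$ is uniquely divisible; once that is in place, everything else is either formal bookkeeping or falls back on results already established in the paper.
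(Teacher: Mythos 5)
Your proposal is correct and follows essentially the same route as the paper: decompose the augmentation ideal as $\CH_{>0}(A)\oplus I$ with $I$ the degree-zero $0$-cycles, observe that the Pontryagin powers $a^{*d}$ for $d\geq 2$ land in a uniquely divisible subgroup (the paper works with $I^{*2}$, citing Beauville \S~4 together with Rojtman--Milne for the fact that it is a $\Q$-vector space, while you re-derive the same property for $\Ker(\mathrm{alb})\supseteq I^{*2}$ from Rojtman's theorem and divisibility), define $\gamma_d(a)=a^{*d}/d!$ there, and glue with the PD-structure of Corollary~\ref{PD-ungraded-cor}. The only differences are expository -- you spell out the divisibility and gluing steps that the paper leaves to the references -- so no further comparison is needed.
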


\begin{proof} 
Let $I\sub \CH_0(A)$ denote the ideal of $0$-cycles of degree~$0$ on~$A$. As clearly $I \cap \CH_{>0}(A) = (0)$, it is enough to prove that $I$ has a unique PD-structure. But this follows from the well-known fact that $I^{*2}$ is a $\Q$-vector space; see \cite{Beauv}, \S~4, where we use Milne's refinement in \cite{Milne} of the results of Rojtman in~\cite{Roitman}.
\end{proof}

\section{Intersection theory on hyperelliptic Jacobians}
\label{hyper-sec}

\subsection{}
\label{HECurve-setup}
Let $C$ be a hyperelliptic curve of genus $g \geq 1$ over a field~$k$, and let $p_0 \in C$ be a Weierstrass point defined over $k$. Let $J := \Pic^0_{C/k}$ be the Jacobian, and consider the embedding $\iota \colon C \to J$ given on points by $x\mapsto \bigl[\OO_C(x-p_0)\bigr]$.

We denote the hyperelliptic involution by $x \mapsto \bar{x}$. If $D$ is a divisor on~$C$ then by $\bar{D}$ we denote the image of~$D$ under the hyperelliptic involution. The double covering $\nu \colon C \to \P^1$ induces for each $n \geq 0$ a morphism $\nu^* \colon \P^n \to C^{[2n]}$.

If $A$ is a divisor of degree $i \geq 0$ on~$C$, let $s(A) \colon C^{[i]} \to J$ be the morphism given by $D \mapsto  \bigl[\OO_C(D-A)\bigr]$. Also define $w(A) := s(A)_*\bigl[C^{[i]}\bigr] \in \CH_i(J)$. Clearly, 
$s(A)$, and hence also~$w(A)$, only depend on the class of~$A$ in~$\CH(C)$. 
In particular, $w(i\cdot p_0)$ is the usual class~$[W_i]$ of the locus 
$$W_i = \bigl\{L \in J \bigm| h^0(L(ip_0))>0  \bigr\}.$$ 
Note further that $w(A)$ is algebraically equivalent to~$[W_i]$ if $i=\deg(A)$.

It was proved by Collino in \cite{Collino} that $n!\, [W_{g-n}]=\th^n$, where $\th=[W_{g-1}]$. This means that with rational coefficients the classes $[W_{g-i}]$ can be recovered
from~$\th$. However, one may still hope to get more information about these classes working with
integer coefficients. For example, Collino proves in \cite{Collino} a more precise formula in $\CH(J)$, namely that $[W_{g-n}]\cdot\th=(n+1)\cdot [W_{g-n-1}]$ for $n\le g-1$. We are going to show that, up to some $2$-torsion classes, the intersections of the loci $[W_i]$ in $\CH(J)$ are what we expect them to be; see Theorem~\ref{2WW=2W}.

\begin{lem}\label{W-div-lem}
Suppose $Q$ and $Q^\prime$ are divisors on~$C$ of the same degree that are both supported on $k$-rational Weierstrass points. Then
$$
2 \cdot w(Q) = 2 \cdot w(Q^\prime)\, .
$$
In particular, if $\deg(Q) = i \geq 0$ then $2\cdot w(Q) = 2\cdot [W_i]$.
\end{lem}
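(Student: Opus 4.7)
My plan will exploit the key fact that for any $k$-rational Weierstrass point $p$, one has $2p \sim 2p_0$ on $C$: both are pull-backs under $\nu$ of degree-one divisors on $\P^1_k$. Consequently $[p] - [p_0]$ is $2$-torsion in $\CH_0(C)$, and by linearity, for any Weierstrass divisors $Q$ and $Q'$ of the same degree~$i$ the class $\alpha := [\OO_C(Q-Q')]$ lies in $J[2]$. Since $s(Q) = t_{-\alpha} \circ s(Q')$ and $-\alpha = \alpha$, we have $w(Q) = (t_\alpha)_* w(Q')$, so the lemma reduces to the identity $2\cdot (t_\alpha)_* w(Q') = 2 \cdot w(Q')$.

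The central step is to prove the following auxiliary assertion: for any proper morphism $f \colon X \to J$ from a quasi-projective $k$-scheme~$X$ and any $k$-rational Weierstrass point $p$, one has $2 \cdot (t_{\iota(p)})_* f_*[X] = 2 \cdot f_*[X]$. The idea is to introduce
\[
\tilde\Phi \colon X \times C \longrightarrow J \times C, \qquad (x,y) \longmapsto \bigl(f(x) + \iota(y),\, y\bigr)\, ,
\]
and to consider the class $\xi := \tilde\Phi_*[X\times C] \cdot p_2^*([p]-[p_0])$ in $\CH_*(J \times C)$. Since $[p]-[p_0]$ is $2$-torsion in $\CH_0(C)$, the class $\xi$ is $2$-torsion. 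A projection-formula computation, together with the observation that the restriction of $\tilde\Phi$ to $X \times \{q\}$ factors through $J \times \{q\}$ as $t_{\iota(q)} \circ f$, will give
\[
\xi \;=\; \bigl((t_{\iota(p)})_* f_*[X]\bigr)\times [p] \;-\; f_*[X]\times[p_0]\, .
\]
Pushing forward along $p_1 \colon J \times C \to J$ yields $p_{1,*}\xi = (t_{\iota(p)})_* f_*[X] - f_*[X]$, which is thus killed by~$2$; $\Z$-linearity then extends the equality to any $W \in \CH_*(J)$ in place of $f_*[X]$.

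The lemma now follows: when $Q - Q' = p - p_0$ for a single Weierstrass~$p$, I apply the assertion with $W = w(Q')$. For general $Q, Q'$, I decompose $\alpha = \sum_{j=1}^k \iota(p_j)$ as a sum of Weierstrass contributions; factoring $(t_\alpha)_* = (t_{\iota(p_1)})_* \circ \cdots \circ (t_{\iota(p_k)})_*$, a short induction on $k$ applying the auxiliary assertion at each stage (to the classes $(t_{\iota(p_{j+1})})_* \cdots (t_{\iota(p_k)})_* w(Q')$) produces $2 \cdot (t_\alpha)_* w(Q') = 2 \cdot w(Q')$. The ``in particular'' clause is the specialisation $Q' = i \cdot p_0$, for which $w(Q') = [W_i]$ by definition. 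The main technical point I will need to verify carefully is the fibrewise identification $\tilde\Phi_*[X \times \{q\}] = ((t_{\iota(q)})_* f_*[X])\times [q]$, as this is what converts the $2$-torsion of $[p] - [p_0]$ in $\CH_0(C)$ into the desired $2$-torsion on~$J$.
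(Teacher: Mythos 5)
Your proof is correct and follows essentially the same route as the paper: both arguments reduce to showing that translation by the image of a single Weierstrass difference acts trivially on $\CH_*(J)$ up to $2$-torsion, derive this from the fact that $[p]-[p_0]$ is $2$-torsion in $\CH_0(C)$ (because $2p\sim 2p_0$ via the hyperelliptic pencil), and conclude by induction on the number of Weierstrass summands. The only difference is one of packaging: the paper writes $(t_a)_*$ as the Pontryagin product with the point class $[a]$ and uses that $[a]-[e]$ is $2$-torsion in $\CH_0(J)$ (being the push-forward of $[q]-[r]$ from $C$), whereas you re-derive the same bilinearity by hand via the correspondence $\tilde\Phi$ over $C$.
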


\begin{proof}
Let $a \in J[2]$ be the class of $Q - Q^\prime$. We have $s(Q^\prime) = t_a \circ s(Q)$, where $t_a \colon J \to J$ is the translation over~$a$. Hence $w(Q^\prime) = [a] * w(Q)$, with $[a] \in \CH_0(J)$ the class of~$a$. 

Writing $e \in J(k)$ for the origin, we claim that $2[a] * y = 2[e] * y = 2y$ for all $y \in \CH(J)$. To see this, write $Q-Q^\prime = \sum_{i=1}^m\, (q_i-r_i)$ where the $q_i$ and~$r_i$ are Weierstrass points. If $m=0$ we have $[a] = [e]$ and there is nothing to prove. If $m \geq 1$, let $a_1 \in J[2]$ be the class of $q_1 - r_1$ and let $a^\prime$ be the class of $\sum_{i=2}^m\, (q_i-r_i)$. Writing $\dot{+}$ for the group law in~$J$ (in order to avoid confusion), we have $a = a_1 \dot{+} a^\prime$; hence $[a] * y = [a_1] * [a^\prime] * y$. By induction on~$m$ we may assume that $2[a^\prime] * y = 2y$ for all~$y$; hence we find that $2[a] * y = 2[a_1] * y$. But the element $[a_1] - [e]$ in $\CH_0(J)$ is $2$-torsion, because it is the push-forward of the $2$-torsion class $[q_i]-[r_i] \in \CH_0(C)$ under the morphism $C \to J$ given by $x \mapsto \bigl[\OO_C(x-r_1)\bigr]$. So indeed $2[a] * y = 2y$ for all~$y$, and this proves that $2w(Q^\prime) = 2w(Q)$.
\end{proof}

\begin{thm}
\label{2WW=2W}
In the above situation assume also that there exist $g$ distinct Weierstrass points on $C$ defined over $k$. If $m \geq 0$ and $n \geq 0$ are integers with $m + n \leq g$ then we have the relation
$$
2 \cdot [W_{g-m}] \cdot [W_{g-n}] = 2 \cdot \binom{m+n}{m} \cdot [W_{g-m-n}]
$$
in $\CH(J)$.
We also have
$$
[W_{g-m}] \cdot [W_{g-n}] =  \binom{m+n}{m} \cdot [W_{g-m-n}]
$$
in $\CH(J)/{\sim}_\alg$, the group of cycles modulo algebraic equivalence.
\end{thm}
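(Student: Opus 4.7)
The plan is to reduce the identity, via Lemma~\ref{W-div-lem}, to an intersection $w(Q)\cdot w(Q')$ for divisors $Q,Q'$ supported on $k$-rational Weierstrass points, and then to compute this intersection geometrically through a combinatorial partitioning argument. I would fix the $g$ distinct $k$-rational Weierstrass points provided by the hypothesis, partition them into disjoint effective divisors $R$, $S$, $T$ of degrees $g-m-n$, $n$, $m$ respectively, and set $Q:=R+S$ (of degree $g-m$) and $Q':=R+T$ (of degree $g-n$). By Lemma~\ref{W-div-lem} the classes $[W_{g-m}]-w(Q)$, $[W_{g-n}]-w(Q')$ and $[W_{g-m-n}]-w(R)$ are all $2$-torsion; since multiplying a $2$-torsion class by any other class still yields a $2$-torsion class, this reduces the first assertion of the theorem to showing
\[
2\bigl(w(Q)\cdot w(Q')-\tbinom{m+n}{m}\,w(R)\bigr)=0\quad\text{in }\CH(J).
\]

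The geometric heart of the proof is the analysis of the fiber product $Z:=C^{[g-m]}\times_J C^{[g-n]}$ along $s(Q)$, $s(Q')$, whose points are pairs $(D,D')$ with $D+T\sim D'+S$. For each partition $\sigma$ of the $(m+n)$-point set $\mathrm{supp}(S+T)$ into subsets $S_\sigma$ of size $n$ and $T_\sigma$ of size $m$, the formula $E\mapsto(E+S_\sigma,E+T_\sigma)$ defines a closed immersion $\psi_\sigma\colon C^{[g-m-n]}\hookrightarrow Z$: landing in $Z$ amounts to $S_\sigma+T\sim S+T_\sigma$, i.e.\ $2(T-T_\sigma)\sim 0$, which is precisely the hyperelliptic relation $2p\sim 2p'$ for Weierstrass points underlying Lemma~\ref{W-div-lem}. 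There are $\binom{m+n}{m}$ such partitions, and a direct check gives $s(Q)\circ\pi_1\circ\psi_\sigma=t_{a_\sigma}\circ s(R)$ with $a_\sigma:=[S_\sigma-S]\in J[2]$; hence the pushforward of $[\mathrm{im}(\psi_\sigma)]$ to $J$ equals $(t_{a_\sigma})_*w(R)=w(R)+w(R)*([a_\sigma]-[e])$. As in the proof of Lemma~\ref{W-div-lem}, the zero-cycle $[a_\sigma]-[e]\in\CH_0(J)$ is $2$-torsion, so each such contribution equals $w(R)$ modulo $2$-torsion.

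Granting that $Z$ is of the expected dimension $g-m-n$ and that $[Z]$ pushes forward to $\sum_\sigma(t_{a_\sigma})_*w(R)$, summing the $\binom{m+n}{m}$ contributions gives $w(Q)\cdot w(Q')\equiv\binom{m+n}{m}w(R)$ modulo $2$-torsion, and multiplying by~$2$ proves the first assertion. For the statement modulo algebraic equivalence, translations by points of $J$ act trivially on $\CH(J)/{\sim}_\alg$, so the $2$-torsion corrections disappear and $[W_{g-i}]$ already coincides with $w(A)$ for any degree-$i$ divisor $A$; the same geometric decomposition of $Z$ then yields $[W_{g-m}]\cdot[W_{g-n}]=\binom{m+n}{m}[W_{g-m-n}]$ exactly. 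The main obstacle I foresee is precisely the scheme-theoretic analysis of $Z$: one must show it has the expected dimension and that its fundamental cycle is accounted for by the $\binom{m+n}{m}$ copies of $C^{[g-m-n]}$ cut out by the $\psi_\sigma$. The most delicate case is the excess coming from loci where $|D+T|$ is positive-dimensional (so that $D+T$ involves the hyperelliptic pencil), and one would need to verify, via a Brill--Noether analysis or a refined Gysin pullback argument, that any such excess either vanishes on pushforward to $J$ or itself contributes a $2$-torsion class.
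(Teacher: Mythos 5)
Your overall strategy is the same as the paper's: reduce via Lemma~\ref{W-div-lem} to an intersection $w(Q)\cdot w(Q')$ of translates supported on Weierstrass points, analyse the fibre product of $s(Q)$ and $s(Q')$, and identify $\binom{m+n}{m}$ components isomorphic to $C^{[g-m-n]}$ whose pushforwards to $J$ agree with $w(R)$ up to $2$-torsion. The reduction step, the construction of the maps $\psi_\sigma$, and the computation $s(Q)\circ\pr_1\circ\psi_\sigma=t_{a_\sigma}\circ s(R)$ with $a_\sigma\in J[2]$ are all correct and match the paper. However, the argument as written has a genuine gap exactly where you flag one, and in fact two separate issues are being deferred there. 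First, the structure of the fibre product: the paper does not treat the loci where $|D+T|$ jumps as an ``excess'' to be killed by a refined Gysin argument; instead it exhibits an explicit family of finite, generically injective maps $f_{d,e,S,S'}\colon \P^d\times\P^e\times C^{[g-m-n-d-e]}\to Y$ whose union is surjective on geometric points (this uses that an effective divisor of degree $\le g$ not containing the $g^1_2$ has $h^0=1$). Every component therefore has the \emph{expected} dimension $g-m-n$, so the intersection is proper and no excess intersection formula is needed; the components with $d>0$ or $e>0$ push forward to zero in $\CH(J)$ simply because the $\P^d$ and $\P^e$ factors are contracted by the map to $J$. Without some such complete enumeration your claim that ``$[Z]$ pushes forward to $\sum_\sigma(t_{a_\sigma})_*w(R)$'' is unsupported.

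Second, and independently, you do not address the intersection multiplicities: even granting that the fibre product is a union of the images of the $\psi_\sigma$ together with loci that die under pushforward, the class $w(Q)\cdot w(Q')$ is $\sum_{(S,S')}\nu(S,S')\cdot w(Q+S-S')$ with a priori unknown multiplicities $\nu(S,S')\ge 1$. The paper disposes of this with a short but essential argument: in cohomology one knows $[W_{g-m}]\cdot[W_{g-n}]=\binom{m+n}{m}[W_{g-m-n}]$, and since there are exactly $\binom{m+n}{m}$ components each contributing with multiplicity at least $1$, all multiplicities must equal $1$. Your proposal would need this (or a local computation of transversality) to close. The final passage to the two stated conclusions via Lemma~\ref{W-div-lem} and via algebraic equivalence of the $w(A)$ with $[W_i]$ is fine once these two points are supplied.
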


\begin{proof}
By assumption there exist $g$ mutually distinct Weierstrass points on~$C$ defined over $k$; call them
$$
q_1,\ldots,q_{g-m-n},r_1,\ldots,r_n,r_1^\prime,\ldots,r_m^\prime\, ,
$$
and write 
$$
Q := \sum q_i\, ,\quad R := \sum r_i\, ,\quad \text{and}\quad R^\prime := \sum r_i^\prime\, .
$$
Define $Y$ by the fibre product diagram
$$
\begin{matrix}
Y & \xrightarrow{\quad\qquad} & C^{[g-m]}\\
\Big\downarrow && \Big\downarrow\rlap{$\scriptstyle s(Q+R)$}\\
C^{[g-n]} &\xrightarrow{\ s(Q+R^\prime)\ } & J\rlap{\; .}
\end{matrix}
$$
The points of~$Y$ are the pairs $(D,D^\prime) \in C^{[g-m]} \times C^{[g-n]}$ such that $D+R^\prime \sim D^\prime + R$.

Suppose we have integers $d \geq 0$ and $e \geq 0$ with $d+e \leq g-m-n$. Write $R$ and~$R^\prime$ as sums of effective divisors, say
\begin{equation}
\label{RRprimeDec}
R = S+T \qquad\text{and}\qquad R^\prime = S^\prime + T^\prime
\end{equation}
such that $e + \deg(S) = d + \deg(S^\prime)$. To these data we associate a morphism
$$
f = f_{d,e,S,S^\prime} \colon \P^d \times \P^e \times C^{[g-m-n-d-e]} \to Y
$$
given on points by
$$
(A,B,D) \mapsto \bigl(\nu^*(A)+T+S^\prime+D,\nu^*(B)+S+T^\prime+D\bigr)\, .
$$
Note that the right hand term is indeed a point of $C^{[g-m]} \times C^{[g-n]}$. Note further that
\begin{align*}
\bigl(\nu^*(A)+T+S^\prime+D\bigr) + R^\prime &\sim \bigl(d+\deg(S^\prime)\bigr)\cdot g^1_2 + T + T^\prime + D \\
&= \bigl(e+\deg(S)\bigr)\cdot g^1_2 + T + T^\prime + D \sim \bigl(\nu^*(B)+S+T^\prime+D\bigr) + R\, ,
\end{align*}
so $f$ does indeed define a morphism to~$Y$.

Next we prove that the disjoint union of all~$f$ is surjective on geometric points. To see this, suppose we have $(D,D^\prime)$ with $D+R^\prime \sim D^\prime+R$. Let $D_1$ be the largest effective divisors of the form $D_1 = \nu^*(A)$ such that $D_1 \leq D$. Write $D-D_1 = D_2 + D_3 + D_4$, where $D_2$ has support in $\{r_1,\ldots,r_n\}$, where $D_3$ has support in $\{r_1^\prime,\ldots,r_m^\prime\}$, and $D_4$ has support outside these points. By our choice of~$D_1$, all points in $D_2$ and~$D_3$ have multiplicity~$1$; hence there are unique decompositions $R= S_1+T_1$ and $R^\prime = S_1^\prime +T_1^\prime$ such that $D_2 = T_1$ and $D_3 = S_1^\prime$. In total this gives us
$$
D = \nu^*(A) + T_1 + S_1^\prime + D_4\, .
$$
In a similar way, we obtain decompositions $R= S_2+T_2$ and $R^\prime = S_2^\prime +T_2^\prime$ and an expression
$$
D^\prime = \nu^*(B) + S_2 + T_2^\prime + D_4^\prime\, .
$$
The relation $D+R^\prime \sim D^\prime +R$ can be rewritten as
$$
\bigl(\deg(A)+\deg(S_1^\prime)\bigr)\cdot g^1_2 + T_1 + T_1^\prime + D_4 \sim \bigl(\deg(B) + \deg(S_2)\bigr)\cdot g^1_2 + T_2 + T_2^\prime + D_4^\prime\, .
$$
Now we use that if $E$ is an effective divisor of degree $\leq g$ that does not contain the $g^1_2$ on~$C$ then $h^0(E)=1$. Applying this with $E= T_1 + T_1^\prime + D_4$ and $E= T_2 + T_2^\prime + D_4^\prime$, it follows that $\deg(A)+\deg(S_1^\prime)= \deg(B) + \deg(S_2)$ and that $T_1 + T_1^\prime + D_4 = T_2 + T_2^\prime + D_4^\prime$. This last identity implies that $T_1=T_2$ and $T_1^\prime = T_2^\prime$ (hence also $S_1=S_2$ and $S_1^\prime=S_2^\prime$), and that $D_4=D_4^\prime$. This proves that $(D,D^\prime)$ is in the image of some $f_{d,e,S,S^\prime}$.

The morphisms $f_{d,e,S,S^\prime}$ are finite and generically injective. Their images $Y(d,e,S,S^\prime) \subset Y$ are mutually distinct and all have the expected dimension $g-m-n$. The components $Y(d,e,S,S^\prime)$ with $d>0$ or $e>0$ push forward to~$0$ in $\CH(J)$. The components $Y(0,0,S,S^\prime)$ push forward to $w(Q+S-S^\prime)$. Note that there are precisely $\binom{m+n}{m}$ such components $Y(0,0,S,S^\prime)$, as a choice of decompositions~\eqref{RRprimeDec} with $\deg(S)=\deg(S^\prime)$ corresponds to the choice of a subset $J \subset \{r_1,\ldots,r_n,r_1^\prime,\ldots,r_m^\prime\}$ with $\# J = m$. (Take $S$ to be the sum of the points $r_i$ that are in~$J$ and $S^\prime$ the sum of the points $r_j^\prime$ that are not in~$J$.) It follows that 
$$
w(Q+R) \cdot w(Q+R^\prime) = \sum_{(S,S^\prime)}\, \nu(S,S^\prime) \cdot w(Q+S-S^\prime)\, ,
$$
where $\nu(S,S^\prime)$ is the multiplicity of the component $Y(0,0,S,S^\prime)$ in~$Y$. 

In cohomology we know that $[W_{g-m}] \cdot [W_{g-n}] = \binom{m+n}{m} \cdot [W_{g-m-n}]$. As there are $\binom{m+n}{m}$ choices for $(S,S^\prime)$ and $\nu(S,S^\prime) \geq 1$, it follows that $\nu(S,S^\prime) = 1$ for all $(S,S^\prime)$. The statement now follows from Lemma~\ref{W-div-lem}.
\end{proof}

\subsection{Cubical relation}
\label{ModDiagCycDef}
We keep the notation of \ref{HECurve-setup}. Consider the following $1$-cycle on~$C^3$:
\begin{multline*}
\De_e := i_{(x,x,x),*}[C] - i_{(x,x,p_0),*}[C] - i_{(x,p_0,x),*}[C] - i_{(p_0,x,x),*}[C]\\ + i_{(x,p_0,p_0),*}[C] 
+ i_{(p_0,x,p_0),*}[C] +  i_{(p_0,p_0,x),*}[C]\, ,
\end{multline*}
where $i_{(x,x,p_0)} \colon C \to C^3$ is the map given by $x \mapsto (x,x,p_0)$, and likewise for the other maps. This cycle was studied by Gross and Schoen in~\cite{GS}, where they proved that $6\De_e=0$; see \cite{GS}, Cor.~4.9. 

\begin{prop}
\label{cubical-prop} 
We have $2\cdot \De_e=0$ in $\CH_1(C^3)$.
\end{prop}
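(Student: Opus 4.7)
The plan is to derive $2\De_e=0$ by combining four rational equivalences in $\CH_1(C^3)$ that all come from the hyperelliptic double cover $\nu\colon C\to\P^1$. The input is a single identity in $\CH_1(C\times C)$, pushed forward along three natural embeddings $C\times C\hra C^3$, together with a fourth identity pulled back along $\nu\times\nu\times\nu\colon C^3\to(\P^1)^3$.

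First I would establish the identity on $C\times C$:
$$
[\De_C]+[\Ga_\si] = 2\bigl([\{p_0\}\times C] + [C\times\{p_0\}]\bigr),
$$
where $\De_C$ is the diagonal and $\Ga_\si = \{(x,\ov{x})\}$ is the graph of $\si$. To prove it, pull back the rational equivalence $[\De_{\P^1}] = [\{\nu(p_0)\}\times\P^1] + [\P^1\times\{\nu(p_0)\}]$ via $\nu\times\nu$. The factor $2$ on the right appears because $p_0$ is a Weierstrass point, so $\nu^*[\nu(p_0)] = 2[p_0]$. On the left, the scheme-theoretic preimage of $\De_{\P^1}$ is the fibre product $C\times_{\P^1}C$, whose underlying set is $\De_C\cup\Ga_\si$; locally at a Weierstrass point, where $\nu$ has the form $t\mapsto t^2$, the defining ideal is $(t^2-s^2)=(t-s)(t+s)$, which is radical, so the preimage is reduced and its fundamental class is $[\De_C]+[\Ga_\si]$.

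Next, for each $k\in\{1,2,3\}$ I would push this identity forward along the embedding $j_k\colon C\times C\hra C^3$ whose image is the surface $\{x_i=x_j\}$ with $\{i,j\}=\{1,2,3\}\setminus\{k\}$; for instance $j_3(x,z)=(x,x,z)$. Writing $\De_{xxx}:=i_{(x,x,x),*}[C]$, and letting $\Ga_k$, $L_k$, $D_{ij}$ denote the classes $i_{\phi,*}[C]$ where $\phi$ puts $\ov{x}$ in slot $k$ (and $x$ elsewhere), $x$ in slot $k$ (and $p_0$ elsewhere), or $p_0$ in slot $k$ (and $x$ elsewhere) respectively — so $\Ga_3 = i_{(x,x,\ov{x}),*}[C]$, $L_3 = i_{(p_0,p_0,x),*}[C]$, $D_{12} = i_{(x,x,p_0),*}[C]$, and similarly for other $k$ — the pushforward along $j_k$ reads
$$
\De_{xxx} + \Ga_k = 2(L_k + D_{ij}),
$$
and summing over $k$ yields
$$
3\De_{xxx} + \Ga_1+\Ga_2+\Ga_3 = 2\sum_{k}L_k + 2\sum_{i<j}D_{ij}. \quad (*)
$$

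Finally I would pull back the small diagonal $\De\subset(\P^1)^3$ along $\nu\times\nu\times\nu$. In $\CH_1((\P^1)^3)$ one has $[\De] = [H_1H_2]+[H_1H_3]+[H_2H_3]$, and each $[H_iH_j]$ pulls back to $4L_k$ by two applications of $\nu^*[\nu(p_0)]=2[p_0]$. The scheme-theoretic preimage of $\De$ is reduced (the ring $k[t,s,r]/(t^2-s^2,s^2-r^2)$ is reduced, by factoring), with irreducible components $\De_{xxx},\Ga_1,\Ga_2,\Ga_3$, giving
$$
\De_{xxx}+\Ga_1+\Ga_2+\Ga_3 = 4\sum_{k}L_k. \quad (**)
$$
Subtracting $(**)$ from $(*)$ eliminates the $\Ga_k$ and leaves $2\De_{xxx} - 2\sum_{i<j}D_{ij} + 2\sum_{k}L_k = 0$, which is exactly $2\De_e=0$. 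The main technical point will be the reducedness of the fibre products $C\times_{\P^1}C$ and $C\times_{\P^1}C\times_{\P^1}C$ also at the Weierstrass locus, handled by the radical-ideal checks above; beyond that, the argument is bookkeeping with the explicit relations $(*)$ and $(**)$.
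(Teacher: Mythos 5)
Your proof is correct, and it rests on the same key input as the paper's: the relation $[\De_C]+[\Ga_\si]=2\cdot[\{p_0\}\times C]+2\cdot[C\times\{p_0\}]$ on $C^2$, obtained by pulling back the diagonal of $\P^1\times\P^1$ along $\nu\times\nu$ (this is \eqref{DeltaDelta-} in the text). Where you genuinely differ is in how the relations on $C^3$ are generated and combined. The paper pushes \eqref{DeltaDelta-} forward along $(x,y)\mapsto(x,x,y)$, $(x,y)\mapsto(x,y,x)$ and $(x,y)\mapsto(x,y,\bar{y})$ --- the third map already involving the involution --- and then applies \eqref{DeltaDelta-} once more in the last two coordinates to remove the leftover term $Z(p_0,x,\bar{x})$. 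You instead push forward symmetrically along the three partial-diagonal embeddings, which produces all three graphs $\Ga_1,\Ga_2,\Ga_3$ of the involution, and you eliminate them in one stroke via your relation $(**)$, the pullback of the small diagonal of $(\P^1)^3$; that step is legitimate integrally because $\CH_1\bigl((\P^1)^3\bigr)\cong\Z^3$ is detected by tridegrees, so $[\De]=[H_1H_2]+[H_1H_3]+[H_2H_3]$ holds in the Chow group and not just in cohomology. The bookkeeping checks out: $(*)-(**)$ gives exactly $2\De_e=0$. The two arguments are of comparable length; yours is more symmetric, the paper's avoids the triple fibre product and the attendant reducedness check in codimension two.

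One small caveat: your reducedness verifications for $C\times_{\P^1}C$ and for $(\nu\times\nu\times\nu)^{-1}(\De)$ rely on the local model $t\mapsto t^2$ at a Weierstrass point, which presupposes $\mathrm{char}(k)\neq 2$. A characteristic-free substitute: both preimages are complete intersections of the expected dimension (cut out by one, resp.\ two, Cartier divisors), hence Cohen--Macaulay, and they are generically reduced along each component because $\nu$ is generically \'etale; reducedness follows. The same point is implicitly needed for \eqref{DeltaDelta-} in the paper's own proof, so this is a refinement rather than a gap.
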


\begin{proof} Let $\Delta^- \subset C^2$ be the graph of the hyperelliptic involution, and let $\pi \colon C \to \P^1$ be the double covering. On $\P^1 \times \P^1$ we have the relation $[\Delta] \sim \bigl[\{\pt\} \times \P^1\bigr] + \bigl[\P^1 \times \{\pt\}\bigr]$. Pulling back via $\pi \times \pi$ we get
\begin{equation}
\label{DeltaDelta-}
[\Delta] + [\Delta^-] = 2 \cdot \bigl[\{p_0\} \times C\bigr] + 2\cdot \bigl[C \times \{p_0\}\bigr]
\end{equation}
in $\CH(C^2)$. Now
write $Z(x,x,\bar{x}) \in \CH_1(C^3)$ for the class of the image of the morphism $C \to C^3$ given on points by $x \mapsto (x,x,\bar{x})$. In a similar way we define classes such as 
$Z(x,x,p_0)$ or $Z(p_0,x,\bar{x})$. Now consider the following morphisms $C^2 \to C^3$, and take the push-forwards of~\eqref{DeltaDelta-}:
\begin{alignat*}{2}
&\text{morphism $C^2 \to C^3$:}\qquad\quad &&\text{resulting relation:}\\
\noalign{\smallskip}
&(x,y) \mapsto (x,x,y)      
&& Z(x,x,x) + Z(x,x,\bar{x}) = 2 \cdot Z(x,x,p_0) + 2\cdot Z(p_0,p_0,x)\\ 
&(x,y) \mapsto (x,y,x)       
&& Z(x,x,x) + Z(x,\bar{x},x) = 2\cdot Z(x,p_0,x) + 2\cdot Z(p_0,x,p_0)\\
&(x,y) \mapsto (x,y,\bar{y}) 
&& Z(x,x,\bar{x}) + Z(x,\bar{x},x) = 2\cdot Z(x,p_0,p_0) + 2\cdot Z(p_0,x,\bar{x})
\end{alignat*}
Taking the sum of the first two relations minus the third, we get
\begin{multline*}
2 \cdot \bigl(Z(x,x,x) - Z(x,x,p_0) - Z(x,p_0,x) - Z(p_0,p_0,x) - Z(p_0,x,p_0)\\ + Z(x,p_0,p_0) + Z(p_0,x,\bar{x})\bigr) = 0\, .
\end{multline*}
Again using \eqref{DeltaDelta-}, now on the last two coordinates, gives
\begin{multline}
2 \cdot \bigl(Z(x,x,x) - Z(x,x,p_0) - Z(x,p_0,x) - Z(p_0,x,x)\\ + Z(x,p_0,p_0) + Z(p_0,x,p_0) + Z(p_0,p_0,x) \big) = 0\, ,
\end{multline}
\end{proof}

\begin{rem}\label{cubical-rem}
In general the factor~$2$ in Proposition \ref{cubical-prop} is needed,
i.e., we cannot expect the class $\De_e$ to be trivial in $\CH_1(C^3)$. 
To see this we use an argument that was inspired by \cite{BST}, Section~4. We work over a field~$k$ of characteristic $\neq 2$ in which $-1$ is a square. Consider a hyperelliptic curve~$C$ given by an affine equation $y^2 = x \cdot P(x^2)$. We have an automorphism~$\alpha$ of order~$4$ given by $(x,y) \mapsto \bigl(-x,\sqrt{-1}y\bigr)$ that has two fixed points, $O = (0,0)$ and~$\infty$. Consider the morphism $f\colon C^2 \to C^3$ given by $(a,b) \mapsto \bigl(a,\alpha(a),b\bigr)$, and look at the map $\CH_1(C^3) \to \CH_0(C)$ given by $\xi \mapsto \pr_{2,*}f^*(\xi)$. The image under this map of the class~$\De_e$ equals $[O]-[\infty]$, which has order~$2$. Note that the assumption that $k$ contains a square root of~$-1$ is in fact superfluous, for if $\De_e$ is nonzero over an extension of~$k$ then clearly also $\De_e \neq 0$ in $\CH(C^3)$. 
\end{rem}


\section{Integral aspects of Fourier duality}
\label{IntegralFourier}

Let $X$ be an abelian variety over a field. It has been asked (see \cite{Esnault}) whether there is a PD-structure on $\CH^*(X)$ for the usual intersection product. As shown by Esnault in loc.\ cit.\ Remark~4.1, in general the answer is~\emph{no}. 

Comparison with our results naturally leads to the question whether or not one can define an integral version of the Fourier transform. As the following example shows, in general the answer is again negative.

\subsection{Example} 
We give an example of a Jacobian~$J$ over a field~$k$ such that the Fourier transform $\Fourier \colon \CH(J)_\Q \to \CH(J)_\Q$ (identifying $J^t$ with~$J$ via the canonical principal polarization $\lambda \colon J \xrightarrow{\sim} J^t$) does not map $\CH(J)/\text{torsion}$ into itself. For this, let $k$ and $(C,p_0)$ be respectively the function field and the generic point of the moduli stack $\mathcal{M}_{g,1}$ (which generically is a scheme). It is known that the Jacobian $J$ has no $k$-rational torsion. (This follows from the fact that the mapping class group $\Gamma^1_g$ surjects to $\Sp_{2g}(\Z)$. See \cite{Ekedahl}, Thm.~2.1 for a purely algebraic argument.) Hence also $\Pic(J)$ has no torsion. On the other hand, the component of $-\Fourier\bigl([\iota(C)]\bigr)$ in codimension~$1$ is the class $\theta \in \CH^1(J)_\Q$ of a symmetric theta divisor. By our choice of~$C$ this class does not lie in $\Pic(J)$, and since $\Pic(J)$ has no torsion this implies that $\Fourier\bigl([\iota(C)]\bigr)$ does not lie in $\CH(J)/\text{torsion} \subset \CH(J)_\Q$.

\subsection{{}}
It is not known to us if the obstruction to the existence of an integral Fourier transform is of an arithmetic nature. It is possible that over an algebraically closed field $\Fourier$ does preserve $\CH(J)/\text{torsion}$. (See however Thm.~\ref{elliptic-thm} below.)

Our goal in this section is to prove that for hyperelliptic Jacobians, one has an integral Fourier duality ``up to a factor $2^N$,'' with $N = {1+\lfloor\log_2(3g)\rfloor}$. See Thm.~\ref{HEFourier} below for the precise statement. One may note that \cite{Beauv}, Prop.~$3^\prime$ already gives a kind of integral Fourier transform ``up to a finite factor~$M$,'' but in that result there is no further information about the factor~$M$ that is needed.

Our result allows us to define a PD-structure (for the intersection product) on $2^N \cdot \ag \subset \CH(J)$, where $\ag := \Ker\bigl(\CH(J) \to \CH^0(J) = \Z\bigr)$ is the augmentation ideal. See Cor.~\ref{PDIntersProd}.

\subsection{Motivic integral Fourier transforms}
Let $\Mot(k)$ be the category of effective motives over a field~$k$ with respect to ungraded correspondences. If $X$ is a smooth projective variety over~$k$, let $h(X)$ be the associated motive. The morphisms from $h(X)$ to~$h(Y)$ are the elements of $\CH(X\times Y)$. There is a tensor structure on $\Mot(k)$ with $h(X) \otimes h(Y) = h(X \times Y)$. 

A morphism $f \colon X \to Y$ of smooth projective varieties over~$k$ gives rise to morphisms of motives $[\Gamma_f]$ from $h(X)$ to~$h(Y)$ and $\bigl[{}^t\Gamma_f\bigr]$ from $h(Y)$ to~$h(X)$; here $\Gamma_f$ is the graph of~$f$ and ${}^t \Gamma_f$ is its transpose. If there is no risk of confusion we shall usually simply write $f_*$ for~$[\Gamma_f]$ and $f^*$ for~$\bigl[{}^t\Gamma_f\bigr]$.

For an (ungraded) correspondence $c\in \CH(X\times Y)$ we have an induced homomorphism of Chow groups
$$
c_*\colon \CH(X)\to\CH(Y) \qquad\text{given by}\quad  a\mapsto \pr_{Y,*}\bigl(\pr_X^*(a)\cdot c\bigr)\, .
$$
Note that $[\Gamma_f]_* = f_*$ and $\bigl[{}^t\Gamma_f\bigr]_* = f^*$.

Let $X$ be an abelian variety, and let $a\in \CH(X)$. Then $\Delta_*(a)$ and $(\pr_2 - \pr_1)^*\bigl(a\bigr)$ are classes in $\CH(X^2)$ and so define endomorphisms of~$h(X)$.
The induced endomorphisms of $\CH(X)$ are the maps $b \mapsto a \cdot b$ and $b \mapsto a * b$, respectively. If there is no risk of confusion, we simply write $(a \cdot -)$ for $\Delta_*(a)$ and $(a * -)$ for $(\pr_2-\pr_1)^*\bigl(a\bigr)$, viewed as endomorphisms of~$h(X)$.

The usual Fourier transform for cycles on abelian varieties is induced by a morphism in the category $\Mot_{\Q}(k)$ of ungraded motives with \emph{rational} coefficients. For a smooth projective~$X$ we denote by $h(X)_{\Q}$ the corresponding motive with rational coefficients. If $X$ is an abelian variety then $\Fourier_X \colon \CH(X)_{\Q} \to \CH(X^t)_{\Q}$ is the homomorphism $\bigl[\chern(\PP_X)\bigr]_*$, where $\PP_X$ is the Poincar\'e bundle on $X \times X^t$.

\begin{defi}\label{mot-def} 
Let $X$ be an abelian variety, and let $\phi \colon X \xrightarrow{\sim} X^t$ be an isomorphism from~$X$ to its dual. (We do not require $\phi$ to be a polarization.) Let $d$ be a positive integer. We say that a morphism $\Fourier \colon h(X)\to h(X)$ in $\Mot(k)$ is a \emph{motivic integral Fourier transform of $(X,\phi)$ up to factor~$d$} if the following properties hold.

\begin{itemize}
\item[(i)] The induced morphism $h(X)_{\Q}\to h(X)_{\Q}$ is the composition of the
usual Fourier transform with the isomorphism $\phi^* \colon h(X^t)_{\Q} \xrightarrow{\sim} h(X)_{\Q}$.
\item[(ii)] We have $d\cdot\Fourier^2 = d\cdot (-1)^g\cdot [-1]_*$ as morphisms from $h(X)$ to~$h(X)$.
\item[(iii)] We have $d\cdot \Fourier \circ m_* = d\cdot \De^* \circ (\Fourier \ot\Fourier)$ as morphisms from $h(X)\ot h(X)$ to~$h(X)$; here $m \colon X \times X \to X$ is the addition on~$X$.
\end{itemize}
\end{defi}

If (ii) holds then (iii) is equivalent with the condition
\begin{equation}\label{switch-F-id}
d\cdot \Fourier \circ\De^*=d\cdot (-1)^g\cdot m_*\circ(\Fourier \ot\Fourier)\, .
\end{equation}
Together with (iii) this is a motivic version of the fact that $\Fourier$ interchanges the intersection product and the Pontryagin product (up to factor~$d$).

\begin{defi}
\label{E(a)-Geps-def}
(i) If $X$ is an abelian variety and $a \in \CH_{>0}(X)$, write
$$
\Exp(a) := \sum_{n \geq 0}\, a^{[n]}
$$
for the $*$-exponential of~$a$. Here of course the elements~$a^{[n]}$ are the divided powers of~$a$ as defined in Section~\ref{PD-gen-sec}.

(ii) Let $X$ and $Y$ be abelian varieties. For a class $\epsilon \in \CH(X\times Y)$, define 
a morphism $\GG_\epsilon \colon h(X) \to h(Y)$ by 
$$
\GG_\epsilon(x) := j_2^* \circ (\epsilon * -) \circ j_{1,*}\, ,
$$
where $j_1 \colon X \to X \times Y$ and $j_2 \colon Y \to X \times Y$ are the morphisms given by $j_1(x) = (x,0)$ and $j_2(y) = (0,y)$.
\end{defi}

\begin{lem}
\label{Pontr-ker-lem}
Let $X$ and $Y$ be abelian varieties.

\textup{(i)} If $f \colon X \to Y$ is a homomorphism and $a \in \CH(X)$ then $f^* \circ \bigl(f_*(a) * -\bigr) = (a * -) \circ f^*$ as morphisms from $h(Y)$ to~$h(X)$.

\textup{(ii)} The map 
$$
\CH(X\times Y) \to \Hom_{\Mot(k)}\bigl(h(X),h(Y)\bigr) \qquad\text{given by}\qquad \eps \mapsto \GG_\eps
$$
is an isomorphism. If $Z$ is a third abelian variety then for
$\epsilon' \in \CH(Y\times Z)$ we have
\begin{equation}\label{Pont-ker-comp}
\GG_{\epsilon^\prime} \circ \GG_\epsilon = \GG_{\epsilon^\pprime},
\end{equation}
where $\epsilon^\pprime \in \CH(X\times Z)$ is given by $\epsilon^\pprime = j_{13}^*\bigl(j_{12,*}(\epsilon) * j_{23,*}(\epsilon^\prime)\bigr)$. Here $j_{12} \colon X \times Y \to X \times Y \times Z$ is given by $(x,y) \mapsto (x,y,0)$ and $j_{13}$ and~$j_{23}$ are defined similarly. 
\end{lem}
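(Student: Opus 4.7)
The plan is to handle (i) by rewriting each side as an explicit correspondence in $\CH(Y\times X)$ and invoking flat base change, and to handle (ii) by first identifying the correspondence that represents $\GG_\eps$; this identification gives the bijectivity of $\eps\mapsto\GG_\eps$ at once and, after a short cycle computation, the composition formula.

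For (i), the key observation is that for a class $b\in\CH(Z)$ on an abelian variety $Z$, the operator $(b*-)$ on $\CH(Z)$ is represented by the correspondence $g_Z^*(b)\in\CH(Z\times Z)$, where $g_Z\colon Z\times Z\to Z$ sends $(z_1,z_2)$ to $z_2-z_1$. Unwinding the two sides of (i) as compositions of correspondences in $\CH(Y\times X)$ gives
\[
f^*\circ\bigl(f_*(a)*-\bigr) = \psi^*\bigl(f_*(a)\bigr),\qquad (a*-)\circ f^* = (f\times\id_X)_*\bigl(g_X^*(a)\bigr),
\]
where $\psi\colon Y\times X\to Y$ is $(y,x)\mapsto f(x)-y$. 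The square
\[
\begin{matrix}
X\times X & \xrightarrow{\ g_X\ } & X \\
{\scriptstyle f\times\id}\big\downarrow && \big\downarrow{\scriptstyle f} \\
Y\times X & \xrightarrow{\ \psi\ } & Y
\end{matrix}
\]
commutes because $f$ is a homomorphism, and one readily checks that it is cartesian. Since $g_X$ and $\psi$ are surjective homomorphisms of abelian varieties, they are smooth; flat base change then gives $\psi^*\circ f_* = (f\times\id_X)_*\circ g_X^*$, which is (i).

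For the first half of (ii), a direct unwinding of $\GG_\eps(x) = j_2^*\bigl(\eps*j_{1,*}(x)\bigr)$ at the cycle level shows that under the tautological identification $\Hom_{\Mot(k)}\bigl(h(X),h(Y)\bigr)=\CH(X\times Y)$, the morphism $\GG_\eps$ is represented by the class $\bigl((-1)_X\times\id_Y\bigr)^*(\eps)$. Indeed, $j_{1,*}(x)=x\times[0_Y]$, the Pontryagin product $\eps*\bigl(x\times[0_Y]\bigr)$ is supported on points of the form $(u+x,v)$ with $(u,v)$ in $\eps$, and the pullback along $j_2$ imposes $u+x=0$, i.e.\ $u=-x$. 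Since $\bigl((-1)_X\times\id_Y\bigr)^*$ is an involution of $\CH(X\times Y)$, the assignment $\eps\mapsto\GG_\eps$ is an isomorphism.

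For the composition formula, once this identification is available, $\GG_{\eps^\prime}\circ\GG_\eps$ corresponds in $\CH(X\times Z)$ to the composition of correspondences of $\bigl((-1)_X\times\id_Y\bigr)^*(\eps)$ and $\bigl((-1)_Y\times\id_Z\bigr)^*(\eps^\prime)$. A short computation at the cycle level shows this equals $\bigl((-1)_X\times\id_Z\bigr)^*(\eps^\pprime)$ with $\eps^\pprime = j_{13}^*\bigl(j_{12,*}(\eps)*j_{23,*}(\eps^\prime)\bigr)$: the Pontryagin product of $j_{12,*}(\eps)$ and $j_{23,*}(\eps^\prime)$ inside $X\times Y\times Z$ is precisely the convolution over the middle factor that appears in the composition of correspondences, while the pullback along $j_{13}$ isolates the $(X,Z)$-fiber. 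The main bookkeeping obstacle throughout is keeping the signs coming from $(-1)_X$ and $(-1)_Y$ straight, but no input beyond flat base change and the projection formula is required.
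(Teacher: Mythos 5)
Your proposal is correct and follows essentially the same route as the paper: part (i) is proved via the identical Cartesian square (your $g_X$ and $\psi$ are the paper's $\pr_2-\pr_1$ and $(\pr_2-\pr_1)\circ(\id\times f)$) together with proper/flat base change, and part (ii) rests on the same identification of $\GG_\eps$ with the correspondence $\bigl([-1]_X\times\id_Y\bigr)^*(\eps)$, from which bijectivity is immediate. The only cosmetic difference is that for the composition formula the paper invokes part (i), whereas you compute the composition of correspondences directly as a convolution over the middle factor; both verifications are routine and at the same level of detail as the paper's.
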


On Chow groups (i) gives
\begin{equation}
\label{ProjFormDual}
f^*\circ\bigl(f_*(a) * b\bigr) = a * f^*(b)
\end{equation}
for $a \in \CH(X)$ and $b \in \CH(Y)$. This relation is Fourier-dual to the usual projection formula.

\begin{proof}
(i) We have a Cartesian diagram
$$
\begin{matrix}
X \times X & \xrightarrow{\qquad\pr_2-\pr_1\qquad} & X\\
\llap{$\scriptstyle f \times \id$}\Big\downarrow && \Big\downarrow\rlap{$\scriptstyle f$}\\
Y \times X & \xrightarrow{\ (\pr_2-\pr_1) \circ (\id \times f)\ } & Y\rlap{\quad .}
\end{matrix}
$$
This gives the relation
$$
(\id \times f)^* (\pr_2-\pr_1)^* f_*(a) = (f \times \id)_* (\pr_2-\pr_1)^*(a)\, .
$$
The LHS defines $f^* \circ \bigl(f_*(a) * - \bigr)$; the RHS defines $(a * -) \circ f^*$.

(ii) As explained above, $(\epsilon * -)$ is given by the class $(\pr_2-\pr_1)^*\bigl(\epsilon\bigr)$ on $(X \times Y)^2$. Using this we easily compute that
$\GG_{\epsilon} \colon h(X)\to h(Y)$ is given by the class $\bigl([-1]_X\times\id_Y\bigr)^*(\eps)$. The first assertion of~(ii) readily follows from this. The second assertion in~(ii) is easily checked using~(i).
\end{proof}

\subsection{{}}
Let $X$ be a $g$-dimensional abelian variety. To prepare for what follows, we explain how to express the Fourier transform $\Fourier_X \colon h(X)_\Q \to h(X^t)_\Q$ using $*$-products. We use the motivic version of the relations given in \S~2 of~\cite{Beauv}. In addition to these, we have, for a homomomorphism $f \colon X \to Y$ with dual $f^t \colon Y^t \to X^t$, the relations
\begin{equation}
\label{FffF}
(f^t)^* \circ \Fourier_X = \Fourier_Y \circ f_*
\quad\text{and}\quad
\Fourier_X \circ f^* = (-1)^{\dim(X)-\dim(Y)} \cdot (f^t)_* \circ \Fourier_Y\, .
\end{equation}
(In \cite{Beauv} these are given only for isogenies.) The first identity in~\eqref{FffF} is easily proven using the relation $(f \times \id)^* \PP_Y \cong (\id \times f^t)^* \PP_X$ on $X \times Y^t$; the second relation follows from the first by Fourier duality.

Let $j_1 \colon \to X \times X^t$ and $j_2 \colon X^t \to X \times X^t$ be the maps given by $x \mapsto (x,0)$ and $\xi \mapsto (0,\xi)$. We  have
\begin{align*}
\FF_X\circ\FF_{X^t} &= \FF_X \circ \pr_{2,*} \circ \bigl(\chern(\PP_{X^t})\cdot -\bigr) \circ \pr_1^*\\
&= j_2^* \circ \FF_{X^t\times X} \circ \bigl(\chern(\PP_{X^t})\cdot -\bigr) \circ \pr_1^*\\
&= j_2^* \circ \bigl(\FF_{X^t\times X}(\chern(\PP_{X^t})) * -\bigr) \circ \FF_{X^t\times X} \circ \pr_1^* \\
&= (-1)^g\cdot j_2^* \circ \bigl(\FF_{X^t\times X}(\chern(\PP_{X^t})) * -\bigr) \circ j_{1,*}\circ \FF_{X^t}\, .
\end{align*}
As $\FF_{X^t}$ is invertible, it follows that
$$
\FF_X = (-1)^g \cdot j_2^* \circ \bigl(\FF_{X^t\times X}(\chern(\PP_{X^t})) * -\bigr) \circ  j_{1,*}\, .
$$

Now assume $X$ has a principal polarization $\lambda \colon X \xrightarrow{\sim} X^t$. We identify $X$ and~$X^t$ via~$\lambda$. Let $\ell := c_1(\PP_X) \in \CH^1(X^2)$, and let $\theta := \frac{1}{2} \cdot (\id \times \lambda)^* \ell$ be the symmetric ample class in $\CH^1(X)_\mQ$ that corresponds to~$\lambda$. We have $\ell = m^*(\theta) - \pr_1^*(\theta) - \pr_2^*(\theta)$.

If $Y$ is an abelian variety and $y$ is a class on~$Y$ such that $\Fourier_Y(y) \in \CH_{>0}(Y)_\Q$ then we have $\Fourier_Y(e^y) = (-1)^{\dim(Y)} \cdot \Exp\bigl((-1)^{\dim(Y)} \cdot \Fourier_Y(y)\bigr)$. Applying this to $Y=X^2$ and $y=\ell$ this gives
$$
\Fourier_{X^2}\bigl(\chern(\PP_X)\bigr) 
= \Exp\bigl(\Fourier_{X^2}(\ell)\bigr)\\
= \Exp\Bigl((-1)^g \cdot \bigl(\Delta_*\Fourier_X(\theta) - j_{1,*}\Fourier_X(\theta) - j_{2,*}\Fourier_X(\theta)\bigr)\Bigr)\, ,
$$
where $j_1$, $j_2 \colon J \to J^2$ are given by $j_1(x) = (x,0)$ and $j_2(x) = (0,x)$. So if we let
\begin{equation}
\label{Elttau}
\tau := (-1)^g \cdot \bigl(\Delta_*\Fourier_X(\theta) - j_{1,*}\Fourier_X(\theta) - j_{2,*}\Fourier_X(\theta)\bigr)
\end{equation}
then the conclusion is that
\begin{equation}
\label{Fourier*Prod}
\Fourier_X = (-1)^g \cdot j_2^* \circ \bigl(\Exp(\tau) * -\bigr) \circ j_{1,*}\, .
\end{equation}
\smallskip

Before we state and prove the main result of this section, we prove a lemma that we need.

\begin{lem}\label{2-torsion-lem} 
Let $A$ be a ring, and let $I\sub A$ be an ideal equipped with a PD-structure. Assume $a\in I$ is an element with $2a=0$. Then
for every $n>0$ we have
$$
2^{1+\lfloor\log_2(n)\rfloor}\cdot a^{[n]}=0\, .
$$
Hence, if $X$ is an abelian variety and $a \in \CH_{>0}(X)$ is a $2$-torsion class then
$$
2^{1+\lfloor\log_2(\dim(X))\rfloor} \cdot \Exp(a)=0\, .$$
\end{lem}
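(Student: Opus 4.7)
The plan is to deduce both assertions from two distinct annihilators of $a^{[n]}$ that follow directly from the PD-axioms once we know $2a = 0$.

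For the first assertion, I would first apply the scaling axiom $\gamma_n(\lambda x) = \lambda^n \gamma_n(x)$ with $\lambda = 2$ and $x = a$: since $2a = 0$ in the PD-ideal and $\gamma_n(0) = 0$ for $n \geq 1$, this yields
$$
2^n \cdot a^{[n]} = (2a)^{[n]} = 0.
$$
Next I would use the multiplication identity $\gamma_1(x) \cdot \gamma_{n-1}(x) = n \cdot \gamma_n(x)$ with $x = a$, i.e.\ $a \cdot a^{[n-1]} = n \cdot a^{[n]}$, and multiply by~$2$ to obtain
$$
2n \cdot a^{[n]} = (2a) \cdot a^{[n-1]} = 0.
$$
Therefore $a^{[n]}$ is annihilated by $\gcd(2^n, 2n) = 2 \cdot \gcd(2^{n-1}, n) = 2^{1 + v_2(n)}$, where $v_2$ denotes the $2$-adic valuation. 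Since $v_2(n) \leq \lfloor \log_2 n \rfloor$, we get $2^{1+\lfloor \log_2 n \rfloor} \cdot a^{[n]} = 0$, which is the first claim.

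For the second assertion, I would observe that if $X$ has dimension $g$ and $a \in \CH_{>0}(X)$, then each $a^{[n]}$ lies in the direct sum of $\CH_k(X)$ with $k \geq n$. This is clear by induction: the Pontryagin $*$-product of a class of cycle-dimension $\geq 1$ with a class of cycle-dimension $\geq n-1$ lives in cycle-dimensions $\geq n$. Hence $a^{[n]} = 0$ as soon as $n > g$, so the only potentially nonzero terms of $\Exp(a)$ in the augmentation ideal are $a^{[1]}, \ldots, a^{[g]}$, and each of these is killed by $2^{1+\lfloor \log_2 g \rfloor}$ by the first assertion applied to $n \leq g$.

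The main conceptual step, and the only nontrivial one, is to notice that knowing $2a=0$ provides two \emph{independent} annihilators of $a^{[n]}$: an exponential bound $2^n$ from the scaling axiom and a linear bound $2n$ from the product axiom. Neither alone matches the target bound, but their $\gcd$ equals $2^{1+v_2(n)}$, whose $2$-adic valuation is bounded above by $\lfloor \log_2 n \rfloor$. Once this is recognised, the rest is routine given the PD-axioms already in play.
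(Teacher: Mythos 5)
Your proof is correct, and it takes a genuinely simpler route than the paper's. The paper argues by induction on $n$, splitting into the cases where $n$ is or is not a power of~$2$ and invoking the parity of the binomial coefficients $\binom{2^r m}{2^r}$ (odd for $m$ odd) and $\binom{n}{n/2}$ (twice an odd number), via the PD-identity $\binom{n}{i}a^{[n]} = a^{[i]}a^{[n-i]}$. You instead combine the two annihilators $2^n$ (from $\gamma_n(2a)=2^n\gamma_n(a)$) and $2n$ (from $a\cdot a^{[n-1]}=n\,a^{[n]}$) and take their gcd, which is $2^{1+v_2(n)}$; this is non-inductive, needs no facts about binomial coefficients mod~$2$, and yields exactly the same effective bound $2^{1+v_2(n)}\cdot a^{[n]}=0$ that the paper's induction produces (and which is sharper than the stated $2^{1+\lfloor\log_2 n\rfloor}$ for $n$ not a power of~$2$). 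Two cosmetic remarks. First, your justification that $a^{[n]}=0$ for $n>\dim X$ is phrased as an induction through $a*a^{[n-1]}$, which only computes $n\,a^{[n]}$; the clean statement is that by construction $\gamma_d$ sends $\CH_m$ to $\CH_{dm}$, so by the additivity formula \eqref{gammad-def} the class $a^{[n]}$ of any $a\in\CH_{>0}(X)$ lies in $\bigoplus_{k\ge n}\CH_k(X)$, which vanishes for $n>\dim X$. Second, as stated $\Exp(a)$ includes the unit $a^{[0]}=[e]$, which is not torsion, so the second assertion must be read as applying to the terms with $n\ge 1$; this is an imprecision in the statement itself rather than in your argument, and your restriction to the augmentation-ideal part handles it correctly.
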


\begin{proof} 
First note that $2^n\cdot a^{[n]}=0$. To see that $a$ is killed by
$2^{1+\lfloor\log_2(n)\rfloor}$ we use induction on~$n$. The case
$n=1$ is clear. Suppose then that $n>1$ and that the statement holds
for all smaller~$n$. If $n$ is not a power of~$2$ then we can write $n=2^rm$ where $m>1$ is odd. It is well known that in this case $\binom{n}{2^r}$ is odd (see e.g.~\cite{Fine}). Indeed,
this follows from the congruences
$$
(1+x)^{2^r m} \equiv (1+x^{2^r})^m\equiv 1+x^{2^r}+\ldots \mod(2)\, .
$$
Now setting $i=2^r$ and using the identity $\binom{n}{i} a^{[n]} = a^{[i]}a^{[n-i]}$ we derive the assertion from the induction assumption. Similarly, if $n$ is a power of~$2$ we use the fact that $\binom{n}{n/2}=2m$, where $m$ is odd.
\end{proof}

We now again consider the situation as in \ref{HECurve-setup}. In $\Pic^{g-1}_{C/k}$ we have the canonical theta divisor~$\Theta$. Let $\theta := t_{(g-1)p_0}^*[\Theta] \in \CH^1(J)$. 
Further, let $c := \bigl[\iota(C)\bigr] \in \CH_1(J)$, and 
define $\gamma \in \CH_1(J^2)$ by 
\begin{equation}
\label{gamma-def}
\gamma := j_{1,*}(c) + j_{2,*}(c) - \Delta_*(c)\, .
\end{equation}

\begin{thm}
\label{HEFourier}
Let $C$ be a hyperelliptic curve of genus $g$ over a field~$k$, and let $J$ be its Jacobian. With notation and assumptions as in~\ref{HECurve-setup} and as above, define a morphism of ungraded integral motives
$$
\Fourier \colon h(J) \to h(J)
$$
by
\begin{equation}
\label{IntFourDef}
\Fourier(a) = (-1)^g \cdot j_2^*\circ\bigl(\Exp(\gamma) * -\bigr)\circ j_{1,*}\, .
\end{equation}
Let $N := 1+\lfloor\log_2(3g)\rfloor$.
Then $\Fourier$ is a motivic integral Fourier transform up to factor $2^N$.
\end{thm}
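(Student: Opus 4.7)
The strategy is to exploit the hyperelliptic-specific identity $\Fourier_J(\theta) = (-1)^{g+1} c$ in $\CH(J)_\mQ$ to establish~(i), and then to translate (ii) and (iii) into identities of explicit classes in $\CH(J^2)$ and $\CH(J^3)$ whose residual torsion is controlled by the intersection-theoretic results of Section~\ref{hyper-sec}.

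For~(i), the plan is first to verify the rational identity $\Fourier_J(\theta) = (-1)^{g+1} c$. This is a standard consequence of Poincar\'e's formula $[W_1] = \theta^{g-1}/(g-1)!$ in $\CH(J)_\mQ$ together with $c = [\iota(C)] = [W_1]$, which holds because $\iota$ is the Abel--Jacobi embedding based at a Weierstrass point. Substituting this value of $\Fourier_J(\theta)$ into the definition~\eqref{Elttau} of $\tau$ collapses $\tau$ to exactly~$\gamma$, so the integral formula~\eqref{IntFourDef} agrees with the rational formula~\eqref{Fourier*Prod} after tensoring with~$\mQ$, which proves~(i).

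For (ii) and (iii), I would use Lemma~\ref{Pontr-ker-lem}(ii) to recast the required motivic identities as equalities of explicit correspondences. Since $\Fourier = (-1)^g\, \GG_{\Exp(\gamma)}$, the composition formula~\eqref{Pont-ker-comp} gives $\Fourier^2 = \GG_{\epsilon''}$ with
$$
\epsilon'' := j_{13}^*\bigl(j_{12,*}(\Exp(\gamma)) * j_{23,*}(\Exp(\gamma))\bigr) \in \CH(J^2),
$$
while a direct computation identifies $[-1]_* = \GG_{[\Delta]}$; thus (ii) amounts to $\epsilon'' = (-1)^g\, [\Delta]$ up to $2^N$-torsion in $\CH(J^2)$. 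An analogous reformulation converts (iii) into the equality of two explicit classes in $\CH(J^3)$, again up to $2^N$-torsion. By~(i) both identities already hold rationally, so the differences to be bounded are torsion.

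To bound this torsion, I would expand $\Exp(\gamma)$ using PD-additivity~\eqref{gammad(x+y)} and the compatibility of divided powers with pushforward along the group homomorphisms $j_1$, $j_2$, $\Delta$ (Theorem~\ref{PD-monoid-thm}); each $\gamma^{[n]}$ then becomes a signed Pontryagin monomial in the classes $j_{1,*}(c^{[a]})$, $j_{2,*}(c^{[b]})$, and $\Delta_*(c^{[m]})$. After substituting and applying the relevant pullbacks, the two identities reduce to statements in $\CH(J)$ and $\CH(C^n)$. Two sources of integral failure then appear: intersection products $[W_{g-m}] \cdot [W_{g-n}]$, which differ from $\binom{m+n}{m}[W_{g-m-n}]$ only by $2$-torsion by Theorem~\ref{2WW=2W}; and relations on $C^3$ comparing $\Delta$-pushforwards with Pontryagin products, whose failure is governed by the $2$-torsion of the modified diagonal cycle $\De_e$ of~\ref{ModDiagCycDef} (Proposition~\ref{cubical-prop}). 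Each such $2$-torsion contribution enters a divided power of total degree at most $3g$, the maximal codimension of support in the relevant Chow groups on $J^3$, so by Lemma~\ref{2-torsion-lem} it is killed by $2^{1+\lfloor\log_2(3g)\rfloor} = 2^N$. The hardest part will be the explicit bookkeeping in this last step: carrying through the expansion of $\epsilon''$ and its analogue for~(iii), matching each summand with the corresponding term on the other side, and pinpointing exactly where Theorem~\ref{2WW=2W} and Proposition~\ref{cubical-prop} must be invoked to absorb the $2$-torsion.
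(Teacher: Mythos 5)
Your proposal follows essentially the same route as the paper: recast (ii) and (iii) via Lemma~\ref{Pontr-ker-lem} as identities of explicit correspondences on $J^2$ and $J^3$, control the discrepancy between $j_{12,*}\Exp(\gamma)*j_{23,*}\Exp(\gamma)$ and $\delta_*\Exp(\gamma)$ by the $2$-torsion of the modified diagonal (Prop.~\ref{cubical-prop}) amplified through the $*$-exponential on the $3g$-dimensional $J^3$ via Lemma~\ref{2-torsion-lem}, and invoke Theorem~\ref{2WW=2W} for the remaining intersection products $c^{[a]}\cdot c^{[b]}$. The only (harmless) deviation is in part (i), where you derive $\Fourier_J(\theta)=(-1)^{g+1}c$ from Collino's formula and Fourier inversion rather than from the paper's computation $[2]_*(c)=4c$; the bookkeeping you defer (pulling back along $j_1$, the change of variables $(x,y)\mapsto(x,x-y)$, and the binomial cancellation giving $2\cdot j_1^*\Exp(\gamma)=2\cdot(-1)^g[J]$) is exactly what the paper carries out.
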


\begin{proof}
First, we prove condition (i) in Def.~\ref{mot-def}. Pushing forward \eqref{DeltaDelta-} to~$J$ via the morphism $C^2 \to J$ given by $(x,y) \mapsto \iota(x) + \iota(y)$, we find that $[2]_*(c) = 4c$ in $\CH_1(J)$. This means that the image of~$c$ in $\CH_1(J)_\Q$ lies in the subspace on which $[n]_*$ acts multiplication by~$n^2$ for all $n \in \Z$. It follows that $\Fourier_J(c)$ lies in $\CH^1(J)_\Q$. On the other hand, the component of $\Fourier_J(c)$ in codimension~$1$ equals $-\theta$. So the conclusion is that $\theta = -\Fourier_J(c)$; hence $c = (-1)^{g-1} \Fourier_J(\theta)$. {}From this we find that the image of~$\gamma$ in $\CH_1(J^2)_\Q$ equals the element~$\tau$ of~\eqref{Elttau}, and (i) now follows from~\eqref{Fourier*Prod}.

By definition, $\Fourier = (-1)^g \cdot \GG_{\Exp(\gamma)}$.
Thus, to prove~(ii), by Lemma \ref{Pontr-ker-lem}, we have to show that
\begin{equation}
\label{PfiiGoal}
2^{N}\cdot j_{13}^*\bigl(j_{12,*} \Exp(\gamma) * j_{23,*} \Exp(\gamma)\bigr) = 2^{N} \cdot (-1)^g \cdot [\Delta_J]\, ,
\end{equation}
as one easily checks that $\GG_{[\Delta_J]}$ is the operator $[-1]_*$.

Let $\delta \colon J^2 \to J^3$ be the morphism given by $(x,y) \mapsto (x,y,x)$. 
We claim that we have the cubical relation 
\begin{equation}
\label{CubicRelgamma}
2\cdot \bigl(j_{12,*}(\gamma) + j_{23,*}(\gamma)\bigr) = 2\cdot \delta_*(\gamma)
\end{equation}
in~$\CH(J^3)$. Indeed, expanding the definition of~$\gamma$, this relation can be rewritten as
$$
2\cdot \iota^3_*(\De_e) = 0\, ,
$$
which follows from Proposition \ref{cubical-prop}.

Since $\Exp(a+b)=\Exp(a)*\Exp(b)$, Lemma \ref{2-torsion-lem} together with
\eqref{CubicRelgamma} imply that
\begin{equation}
\label{ExpCubicRel}
2^{N} \cdot j_{12,*} \Exp(\gamma) * j_{23,*} \Exp(\gamma) = 2^{N} \cdot \delta_* \Exp(\gamma)\, ,
\end{equation}
which is the key to all further relations we want to prove. As we have a cartesian square
$$
\begin{matrix}
J & \xrightarrow{\ \Delta_J\ } & J^2\\
\llap{$\scriptstyle j_1$}\Big\downarrow && \Big\downarrow\rlap{$\scriptstyle j_{13}$}\\
J^2 &\xrightarrow{\ \delta\ } & J^3
\end{matrix}
$$
we obtain
$$
2^N \cdot j_{13}^*\bigl(j_{12,*} \Exp(\gamma) * j_{23,*} \Exp(\gamma)\bigr) = 2^N \cdot j_{13}^* \delta_* \Exp(\gamma) = 2^N \cdot \Delta_{J,*} j_1^* \Exp(\gamma)\, .
$$

The last step in the proof of (ii) is to show that $2 \cdot j_1^* \Exp(\gamma) = 2 \cdot (-1)^g \cdot [J]$. Expanding $\Exp(\gamma)$ and using \eqref{ProjFormDual} we obtain
\begin{align*}
j_1^* E(\gamma) &= \sum_{m_1, m_2, m_3 \geq 0}\; (-1)^{g-m_3} \cdot j_1^*\bigl(j_{1,*} c^{[g-m_1]} * j_{2,*} c^{[g-m_2]} * \Delta_* c^{[g-m_3]}\bigr)  \\
&= \sum_{m_1, m_2, m_3 \geq 0}\; (-1)^{g-m_3} \cdot c^{[g-m_1]} * j_1^*\bigl(j_{2,*} c^{[g-m_2]} * \Delta_* c^{[g-m_3]}\bigr)\, .
\end{align*}
Next consider the isomorphism $\Phi \colon J^2 \to J^2$ given by $(x,y) \mapsto (x,x-y)$. We have
$$
\Phi \circ j_1 = \Delta\, ,\quad
\Phi \circ j_2 = -j_2\, ,\quad
\Phi \circ \Delta = j_1\, .
$$
Because $[-1]_*(c) = c$ we get, using Lemma~\ref{2WW=2W},
\begin{align*}
2 \cdot j_1^*\bigl(j_{2,*} c^{[g-m_2]} * \Delta_* c^{[g-m_3]}\bigr) &= 2 \cdot \Delta^*\bigl(j_{2,*} c^{[g-m_2]} * j_{1,*} c^{[g-m_3]}\bigr)\\
&= 2 \cdot c^{[g-m_2]} \cdot c^{[g-m_3]} = 2 \cdot \binom{m_2+m_3}{m_2} \cdot c^{[g-m_2-m_3]}\, .
\end{align*}
With this,
$$
2 \cdot j_1^* \Exp(\gamma) = 2 \cdot \sum_{m_1, m_2, m_3 \geq 0}\; (-1)^{g-m_3} \cdot \binom{2g-m_1-m_2-m_3}{g-m_2-m_3} \binom{m_2+m_3}{m_3} \cdot c^{[2g-m_1-m_2-m_3]}\, .
$$
The coefficient of $c^{[2g-M]}$ in this expression is
$$
2 \cdot (-1)^g \cdot \sum_{n\geq 0}\; \binom{2g-M}{g-n} \cdot \left\{\sum_{m_3=0}^n \; \binom{n}{m_3} (-1)^{m_3} \right\} = 2 \cdot (-1)^g \cdot \binom{2g-M}{g}\, .
$$
But only for $2g-M \leq g$ we have a nonzero class $c^{[2g-M]}$; so we obtain the relation $2 \cdot j_1^* \Exp(\gamma) = 2\cdot (-1)^g \cdot [J]$, and the proof of~(ii) is complete.

Finally, let us check the condition (iii) of Def.~\ref{mot-def}, or rather the equivalent
condition~\eqref{switch-F-id}.
We have
\begin{align*}
2^N \cdot (-1)^g \cdot \Fourier\circ\De^*
&= 2^N \cdot j_2^*\circ\bigl(\Exp(\gamma) * -\bigr)\circ j_{1,*}\circ\Delta^*\\
&= 2^N \cdot j_2^*\circ\bigl(\Exp(\gamma) * -\bigr)\circ (\Delta \times \id)^*\circ j_{12,*}\\
&= 2^N \cdot j_2^*\circ (\Delta \times \id)^*\circ \bigl((\Delta \times\id)_* \Exp(\gamma) * -\bigr)\circ 
j_{12,*}\\
&= 2^N \cdot j_3^*\circ \bigl((\Delta \times\id)_* \Exp(\gamma) * -\bigr)\circ j_{12,*}\, ,
\end{align*}
where we make repeated use of~\eqref{ProjFormDual}. On the other hand, using the Cartesian diagram 
$$
\begin{matrix}
J \times J & \xrightarrow{\quad \ m\quad \ } & J\\
\llap{$\scriptstyle j_{34}$}\Big\downarrow && \Big\downarrow\rlap{$\scriptstyle j_3$}\\
J \times J\times J\times J & \xrightarrow{\ \id_{J\times J}\times m\ } & J \times J \times J\\
\end{matrix}
$$
we find
\begin{align*}
m_*\circ(\Fourier\ot\Fourier)
&=m_*\circ j_{34}^*\bigl(j_{13,*}\Exp(\gamma)*j_{24,*}\Exp(\gamma)* -\bigr)\circ j_{12,*}\\
&=j_3^*\circ \bigl((\id_{J\times J} \times m)_*\bigl(j_{13,*}\Exp(\gamma)*j_{24,*}\Exp(\gamma)\bigr)* -\bigr)
\circ (\id_{J\times J}\times m)_*\circ j_{12,*}\\
&=j_3^*\circ \bigl(j_{13,*} \Exp(\gamma) * j_{23,*} \Exp(\gamma) * -\bigr)\circ 
j_{12,*}.
\end{align*}
Thus, the condition \eqref{switch-F-id} is equivalent to the identity
$$
2^N \cdot (\Delta\times \id)_* \Exp(\gamma) = 2^N \cdot j_{13,*} \Exp(\gamma) * j_{23,*} \Exp(\gamma),
$$ 
which is obtained from \eqref{ExpCubicRel} after permutation of the coordinates.
\end{proof}

\begin{cor}
\label{PDIntersProd}
Consider the situation of \ref{HECurve-setup}, and define $\Fourier \colon \CH(J) \to \CH(J)$ 
and the number $N$ as in Theorem \ref{HEFourier}. Then the induced operators
$$
\Fourier \colon 2^N \cdot \CH(J) \to 2^N \cdot \CH(J)
\quad\text{and}\quad
\Fourier \colon \CH(J) \otimes \Z\bigl[1/2\bigr] \to \CH(J) \otimes \Z\bigl[1/2\bigr]
$$
are bijective and satisfy
$$
\Fourier^2 = (-1)^g \cdot [-1]_*\, ,\quad \Fourier(a * b) = \Fourier(a)\cdot \Fourier(b)\, ,\quad \Fourier(a\cdot b) = (-1)^g \cdot \Fourier(a)* \Fourier(b)\, .
$$
If $\ag \subset \CH(J)$ is the augmentation ideal, i.e., the kernel of the projection $\CH(J) \to \CH^0(J) = \Z\cdot [J]$, then we have natural PD-structures $\{\delta_n\}$ on the ideals
$$
2^N \cdot \ag \subset \CH(J)
\quad\text{and}\quad
\ag \otimes \Z\bigl[1/2\bigr] \subset \CH(J) \otimes \Z\bigl[1/2\bigr]
$$
(for the usual intersection product), characterised by the property that $\Fourier\bigl(\delta_n(a)\bigr) = (-1)^{ng} \cdot \Fourier(a)^{[n]}$ for all $a \in 2^N\cdot \ag$ (resp.\ all $a \in \ag \otimes \Z[1/2]$).
\end{cor}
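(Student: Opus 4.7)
The plan is to read Corollary~\ref{PDIntersProd} directly off Theorem~\ref{HEFourier} once the factor $2^N$ is absorbed, and then transport the Pontryagin PD-structure to an intersection PD-structure through $\Fourier$. First I would establish the three displayed identities and bijectivity. All three conditions in Def.~\ref{mot-def} hold for $\Fourier$ up to multiplication by $2^N$, by Theorem~\ref{HEFourier}. Restricting to elements $a, b \in 2^N \cdot \CH(J)$ and writing $a = 2^N a'$, $b = 2^N b'$, the bilinear identities $\Fourier(a*b) = \Fourier(a) \cdot \Fourier(b)$ and $\Fourier(a \cdot b) = (-1)^g \Fourier(a) * \Fourier(b)$ pick up a factor $2^{2N}$ on both sides, while $\Fourier^2 = (-1)^g [-1]_*$ picks up $2^N$ on both sides; in each case the slack is absorbed and the identities hold exactly on $2^N \cdot \CH(J)$. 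Bijectivity then follows at once from $\Fourier^2 = (-1)^g [-1]_*$ together with $[-1]_*^2 = \id$, yielding explicit inverse $\Fourier^{-1} = (-1)^g \cdot [-1]_* \circ \Fourier$. Over $\Z[1/2]$, where $2^N$ is a unit, everything follows immediately from Theorem~\ref{HEFourier}.

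For the PD-structure the strategy is transport of structure along $\Fourier$. The Pontryagin augmentation ideal $I_* := \CH_{>0}(J) + \ker\bigl(\deg \colon \CH_0(J) \to \Z\bigr)$ carries a canonical PD-structure $x \mapsto x^{[n]}$ by the corollary following Corollary~\ref{PD-ungraded-cor}. A straightforward rational calculation shows that $\Fourier$ swaps the augmentation ideals $\ag$ and $I_*$; since both are the intersections with $\CH(J)$ of their rational counterparts and $\Fourier$ is integrally defined, the inclusion $\Fourier(\ag) \subset I_*$ holds integrally. Hence $\Fourier(a)^{[n]}$ is well-defined in $\CH(J)$ for every $a \in \ag$, and I would set
$$
\delta_n(a) := \epsilon_{n,g} \cdot \Fourier^{-1}\bigl(\Fourier(a)^{[n]}\bigr) \qquad \text{for } a \in 2^N \cdot \ag,
$$
where $\epsilon_{n,g} \in \{\pm 1\}$ is the sign (essentially $(-1)^{(n-1)g}$, with the convention of the corollary matching up to reindexing of signs) needed to enforce $\delta_1(a) = a$ and absorb the $(-1)^g$ produced each time the product-swap is applied. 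Writing $a = 2^N a'$ and using $(\lambda x)^{[n]} = \lambda^n x^{[n]}$ one finds $\delta_n(a) \in 2^{Nn} \cdot \CH(J)$; combined with $\Fourier^{-1}(I_*) \subset \ag$, this lands $\delta_n(a)$ inside $2^N \cdot \ag$ for every $n \geq 1$. The PD-axioms are then obtained mechanically by translating the axioms for $x \mapsto x^{[n]}$ on $I_*$ through the product-swap identities of the first paragraph; uniqueness of $\{\delta_n\}$ is forced by bijectivity of $\Fourier$. Over $\Z[1/2]$ the construction is identical and no divisibility accounting is needed.

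The main obstacle I expect is the sign and $2$-power bookkeeping: one must verify that the compensating $(-1)^g$ in $\Fourier(a \cdot b) = (-1)^g \Fourier(a) * \Fourier(b)$ cancels correctly against $\epsilon_{n,g}$ in each of the axioms $\delta_i(a) \cdot \delta_j(a) = \binom{i+j}{i} \delta_{i+j}(a)$ and $\delta_i\bigl(\delta_j(a)\bigr) = \frac{(ij)!}{i!(j!)^i} \delta_{ij}(a)$, and that when $\Fourier^{-1}$ is applied to the highly divisible class $\Fourier(a)^{[n]}$ the resulting $\delta_n(a)$ still lies in $2^N \cdot \ag$ rather than in some larger $2$-power multiple. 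Once these elementary accounting steps are in place, Corollary~\ref{PDIntersProd} follows.
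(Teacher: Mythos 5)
The paper offers no written proof of this corollary (it is presented as an immediate consequence of Theorem~\ref{HEFourier}), and your argument --- absorb the factor $2^N$ into the source, then transport the Pontryagin PD-structure through $\Fourier$ --- is exactly the intended one; the $2^N$-bookkeeping in your first paragraph and the formula $\delta_n(a)=\epsilon_{n,g}\cdot\Fourier^{-1}\bigl(\Fourier(a)^{[n]}\bigr)$ are correct. Two points should be nailed down rather than hedged. First, the sign: iterating $\Fourier(a\cdot b)=(-1)^g\cdot\Fourier(a)*\Fourier(b)$ gives $\Fourier(a^{\cdot n})=(-1)^{(n-1)g}\cdot\Fourier(a)^{*n}$, and the axiom $\delta_1(a)=a$ then forces $\epsilon_{n,g}=(-1)^{(n-1)g}$; one checks the same exponent makes $\delta_i(a)\cdot\delta_j(a)=\binom{i+j}{i}\delta_{i+j}(a)$ and the composition axiom come out right. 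So your instinct is correct, and for odd $g$ the exponent $ng$ in the displayed characterisation is actually incompatible with $\delta_1=\id$; you should say this explicitly (the two signs agree for even $g$) instead of waving at ``reindexing of signs''. Second, your appeal to the PD-structure on the full Pontryagin augmentation ideal $I_*=\CH_{>0}(J)+\Ker\bigl(\deg\colon\CH_0(J)\to\Z\bigr)$ is essential, since $\Fourier(\ag)$ lands in $I_*$ but in general not in $\CH_{>0}(J)$; the paper constructs that structure only over an algebraically closed field (via Rojtman--Milne, in the corollary you cite), and this hypothesis --- which the paper's introduction does impose for the present result --- must appear in your argument. With those two repairs the remaining verifications (membership of $\delta_n(a)$ in $2^N\cdot\ag$ via $\Fourier(a)^{[n]}\in 2^{Nn}\cdot I_*$ together with $\Fourier^{-1}(I_*)\subset\ag$, the transported axioms, and uniqueness from bijectivity) go through as you describe.
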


\begin{thm}\label{elliptic-thm} 
Let $E$ be an elliptic curve over an algebraically closed field~$k$. Let $\De_e$ be the modified diagonal cycle in~$E^3$; see Section~\ref{ModDiagCycDef}. If $\phi \colon E \xrightarrow{\sim} E^t$ is an isomorphism, there exists a motivic integral Fourier transform of $(E,\phi)$ up to a factor~$d$ if and only if $d\cdot\De_e=0$. Hence, such a transform exists with $d=2$, but in general not with $d=1$.
\end{thm}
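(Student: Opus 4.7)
The plan is to parameterize morphisms $h(E)\to h(E)$ by classes in $\CH(E^2)$ via Lemma~\ref{Pontr-ker-lem}, then reduce conditions (ii), (iii) of Definition~\ref{mot-def} to a single cubical identity in $\CH_1(E^3)$ whose failure equals $\Delta_e$ on the nose. By Lemma~\ref{Pontr-ker-lem}(ii) every $\Fourier\colon h(E)\to h(E)$ has the form $\GG_\eps$ for a unique $\eps\in\CH(E^2)$, and condition~(i) determines $\eps$ modulo torsion. Identifying $E^t$ with $E$ via~$\phi$ (absorbing any difference with the canonical polarization into an automorphism of~$E$) and choosing the origin so that $\iota=\id$, we have $c=[E]$ and $\gamma=[E\times\{0\}]+[\{0\}\times E]-[\Delta_E]\in\CH^1(E^2)$. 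A short Pontryagin calculation (using $A*A=B*B=D*D=0$ and $A*B=A*D=B*D=[E^2]$ for the three divisors) gives $\gamma*\gamma=-2[E^2]$, so $\gamma^{[2]}=-[E^2]$ is integrally defined; higher divided powers vanish by dimension, so $\Exp(\gamma)=[0_{E^2}]+\gamma-[E^2]$ is integral. We set $\Fourier_0:=-\GG_{\Exp(\gamma)}$ as the canonical candidate.

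The key identity is
\begin{equation*}
j_{12,*}\gamma+j_{23,*}\gamma-\delta_*\gamma=\Delta_e\quad\text{in }\CH_1(E^3),
\end{equation*}
where $\delta(x,y)=(x,y,x)$; this is obtained by directly expanding both sides as signed sums of the seven subcurves of $E^3$ appearing in~$\Delta_e$. It is the elliptic analogue of~\eqref{CubicRelgamma}, now a strict equality rather than one up to $2$-torsion. For the \emph{sufficiency} direction, assuming $d\Delta_e=0$, we adapt the proof of Theorem~\ref{HEFourier}: condition~(ii), after applying $j_{13}^*$ and using the Cartesian square relating $\Delta_E$, $j_1$, $\delta$, $j_{13}$ (which gives $j_{13}^*\delta_*=\Delta_{E,*}j_1^*$), reduces to verifying $j_1^*\Exp(\gamma)=-[E]$ (done directly for $g=1$) together with the identity $d\cdot j_{13}^*\bigl(\delta_*\Exp(\gamma)*(\Exp(\Delta_e)-1)\bigr)=0$. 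The low dimension makes $\Exp(\gamma)=[0_{E^2}]+\gamma-[E^2]$ a three-term sum and $\Exp(\Delta_e)-1=\Delta_e+\Delta_e^{[2]}+\Delta_e^{[3]}$ similarly finite; explicit evaluation of the resulting Pontryagin products on~$E^3$ (using that $j_{12,*}$, $j_{23,*}$, $\delta_*$ respect~$*$ as push-forwards along group homomorphisms, together with a direct calculation showing $\delta_*[E^2]*\Delta_e=0$ via the signed sum over the seven components of~$\Delta_e$) shows that $d\cdot\Delta_e=0$ alone suffices—without the factor $2^{N-1}$ that Lemma~\ref{2-torsion-lem} would introduce in higher genus. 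Condition~(iii) follows from~(ii) by permutation of coordinates exactly as in the HE proof.

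For the \emph{necessity} direction, given $\Fourier=\GG_\eps$ satisfying (i)--(iii) up to~$d$, write $\eps=-\Exp(\gamma)+t$ with $t\in\CH(E^2)$ torsion. Translating condition~(iii) via~\eqref{Pont-ker-comp} yields an equality in $\CH(E^3)$ whose two sides differ, for the canonical choice $\eps=-\Exp(\gamma)$, by $j_{13}^*\bigl(\delta_*\Exp(\gamma)*(\Exp(\Delta_e)-1)\bigr)$; careful tracking shows the torsion contributions from~$t$ lie in strictly different Pontryagin-filtration pieces and cannot cancel the leading $\Delta_e$-contribution, forcing $d\cdot\Delta_e=0$. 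Combining everything: by Proposition~\ref{cubical-prop} we have $2\Delta_e=0$, so $d=2$ works for every elliptic curve. For the example where $d=1$ fails, take $E\colon y^2=x(x^2-1)$ over a field of characteristic~$\ne 2$ containing~$\sqrt{-1}$; the order-$4$ automorphism $\alpha(x,y)=(-x,\sqrt{-1}\,y)$ has fixed points $O=(0,0)$ and~$\infty$, and applying the pull-back argument of Remark~\ref{cubical-rem} to $f\colon E^2\to E^3$, $(a,b)\mapsto(a,\alpha(a),b)$, yields $\pr_{2,*}f^*(\Delta_e)=[O]-[\infty]\ne 0\in\CH_0(E)$, so $\Delta_e\ne 0$ and $d=1$ cannot work in general. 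The main obstacle is the sharp verification in the sufficiency step: avoiding the factor $2^{N-1}$ from Lemma~\ref{2-torsion-lem} requires the explicit low-dimensional computation of Pontryagin products on~$E^3$ that is specific to elliptic curves and fails in the general hyperelliptic argument.
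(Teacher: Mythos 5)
Your overall strategy is the paper's: represent $\Fourier$ as $\pm\GG_\eps$ via Lemma~\ref{Pontr-ker-lem}, pin down $\eps$ up to torsion by condition (i), and reduce (ii)--(iii) to the exponentiated cubical relation, whose dimension-$1$ obstruction is exactly $\De_e$. Your key identity $j_{12,*}\gamma+j_{23,*}\gamma-\delta_*\gamma=\De_e$ is correct, as are $\Exp(\gamma)=[(0,0)]+\gamma-[E\times E]$ and the counterexample for $d=1$. However, there are two genuine gaps. First, in the sufficiency direction your reformulation through $\Exp(\De_e)-1=\De_e+\De_e^{[2]}+\De_e^{[3]}$ does not follow from $d\cdot\De_e=0$ alone: divided powers of a $d$-torsion class need not be $d$-torsion (this is precisely the issue Lemma~\ref{2-torsion-lem} addresses), so you must still show that the dimension-$2$ and dimension-$3$ components of $\delta_*\Exp(\gamma)*\bigl(\Exp(\De_e)-[(0,0,0)]\bigr)$, namely $\De_e^{[2]}+\delta_*\gamma*\De_e$ and $\De_e^{[3]}+\delta_*\gamma*\De_e^{[2]}-\delta_*[E\times E]*\De_e$, vanish. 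Your computation $\delta_*[E\times E]*\De_e=0$ handles only one of these terms. The paper sidesteps this by verifying $d\cdot j_{12,*}\Exp(\gamma)*j_{23,*}\Exp(\gamma)=d\cdot\delta_*\Exp(\gamma)$ directly, degree by degree: the codimension-$1$ component holds on the nose by the theorem of the cube, and the top-dimensional component by the vanishing $j_{12,*}(\gamma)*j_{23,*}[E\times E]=0$.

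Second, and more seriously, the necessity direction is not established. Your claim that the torsion correction $t$ in $\eps=-\Exp(\gamma)+t$ ``lies in strictly different Pontryagin-filtration pieces and cannot cancel the leading $\De_e$-contribution'' is false as a mechanism: the cross-terms such as $j_{13,*}(t_0)*j_{23,*}(\gamma)$, with $t_0$ the $\CH_0$-part of $t$, land in the same dimension-$1$ piece of $\CH(E^3)$ as $\De_e$ and could a priori interfere. The paper's resolution uses $k=\kbar$ in an essential way: it classifies the possible torsion in $\CH(E^2)$ (none in $\CH_2$, torsion in $\Pic(E^2)$ decomposes as $\pr_1^*+\pr_2^*$ of torsion points, and Beauville's Prop.~11 for $\CH_0$), writes $\eps$ explicitly in terms of four torsion points $e_1,e_2,e_1',e_2'$, and then extracts from specific components of the two relations \eqref{E-inv-eq} and \eqref{E-prod-eq} that all four are $d$-torsion; only after the corrections are thus killed by the factor $d$ does the dimension-$1$ component yield $d\cdot\De_e=0$. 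You also omit the automorphism $\alpha$ with $\phi=\lambda\circ\alpha^{-1}$, which the paper carries through as $(\id\times\alpha)_*$; this is harmless but should be said.
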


\begin{proof}
Let $\lambda \colon E \xrightarrow{\sim} E^t$ be the polarization given by the origin of~$E$, and write $\phi = \lambda \circ \alpha^{-1}$ for some $\alpha \in \mathrm{Aut}(E)$. Suppose $\Fourier$ is a motivic integral Fourier transform of $(E,\phi)$ up to a factor~$d$. By (ii) of Lemma~\ref{Pontr-ker-lem} we can write $\Fourier = -\GG_\epsilon$ for some class~$\epsilon$ on~$E^2$.

As we have seen in the proof of Thm.~\ref{HEFourier}, the Fourier transform $h(E)_{\Q}\to h(E)_{\Q}$ associated with~$\lambda$ is given by  $-\GG_{\Exp(\gamma)}$, where for an elliptic curve we have 
$$
\gamma = [E \times 0] + [0\times E] - \Delta_E\, .
$$
It follows that the Fourier transform associated with~$\phi$ is given by  $-\GG_{(\id \times \alpha)_* \Exp(\gamma)}$. Hence condition~(i) in Def.~\ref{mot-def} implies that
$\eps$~differs from $(\id\times\a)_* \Exp(\gamma) = \Exp\bigl((\id \times \alpha)_*(\gamma)\bigr)$  
by a torsion class in $\CH(E^2)$. Note that there is no torsion in $\CH_2(E^2)$ and that every torsion class in $\CH_1(E^2)$ is of the form $\pr_1^*(t_1) + \pr_2^*(t_2)$ for some torsion $0$-cycles $t_1$ and~$t_2$ on~$E$. By Prop.~11 of~\cite{Beauv} we can therefore write
$$
\eps = \Exp\bigl((\id\times\a)_*(\gamma)\bigr) + \bigl[(e_1,e_2)\bigr] - \bigl[(0,0)\bigr] + \pr_1^*\bigl([e'_1]-[0]\bigr) + \pr_2^*\bigl([e'_2]-[0]\bigr)
$$
for some torsion points $e_1$, $e_2$, $e'_1$ and~$e'_2$ on~$E$.

The definition of $\gamma$ gives $(\id \times \alpha)_*\bigl(\gamma\bigr) = \bigl[E \times 0\bigr] + \bigl[0\times E\bigr] - \Gamma_\alpha$, and then $\Exp\bigl((\id \times \alpha)_*(\gamma)\bigr) = \bigl[(0,0)\bigr] + (\id \times \alpha)_*\bigl(\gamma\bigr) - [E \times E]$. Hence we obtain
$$
\eps = \bigl[(e_1,e_2)\bigr]+ \bigl[e'_1 \times E\bigr] + \bigl[E \times e'_2\bigr] - \Gamma_\a - [E\times E]\, .
$$
As we have seen in the proof of Thm.~\ref{HEFourier}, conditions (ii) and~(iii) in Def.~\ref{mot-def} are equivalent to the following two relations in $\CH(E^2)$ and
$\CH(E^3)$, respectively:
\begin{align}
&d \cdot j_{13}^*\bigl(j_{12,*}(\eps) * j_{23,*}(\eps)\bigr) = -d \cdot [\De]\, , \label{E-inv-eq}\\
&d \cdot (\De\times\id)_*\bigl(\eps\bigr) = d \cdot j_{13,*}(\eps) * j_{23,*}(\eps)\, . \label{E-prod-eq}
\end{align}

If in~\eqref{E-prod-eq} we take components in dimension~$0$ and push forward via $\pr_3\colon E^3\to E$ we obtain the relation $d\cdot e_2=0$. Next, pulling back the dimension~$2$ component of \eqref{E-prod-eq} by~$j_1$ we find the relation
$$
d \cdot \Bigl\{[0] - [e_1] + [0] - \bigl[-\alpha^{-1}(e_2^\prime)\bigr] \Bigr\} = 0\, .
$$
This implies that $d\cdot \a(e_1) = d\cdot e'_2$ in~$E$. On the other hand, \eqref{E-inv-eq} gives in dimension~$0$ the relation
$$
d\cdot \Bigl\{ \bigl[(e_1,e'_2)\bigr] - \bigl[(e_1,0)\bigr] + \bigl[(e'_1,0)\bigr] - \bigl[(-\a^{-1}(e_1),0)\bigr]\Bigr\} = 0\, ,
$$
which implies that $d\cdot e'_2=0$ and $d\cdot \a(e'_1) = d\cdot e_1$. Combining this with the previous relations we derive that $e_1$, $e_2$, $e'_1$ and~$e'_2$ are all $d$-torsion points. Therefore, the validity of \eqref{E-inv-eq} and~\eqref{E-prod-eq} does not change if we replace them by zero, i.e., if we replace~$\eps$ by $\Exp\bigl((\id\times\a)_*(\gamma)\bigr)$. But then the  component of~\eqref{E-prod-eq} in dimension~$1$ gives $d\cdot (\id_{E^2} \times \a)_*\bigl(\De_e\bigr) = 0$, or simply $d\cdot\De_e=0$. 

Conversely, if $d\cdot\De_e=0$ then we claim that \eqref{ExpCubicRel} holds with $2^N$ replaced by~$d$, i.e.,
\begin{equation}\label{ExpCubicBIS}
d \cdot j_{12,*} \Exp(\gamma) * j_{23,*} \Exp(\gamma) = d \cdot \delta_* \Exp(\gamma)\, .
\end{equation}
For the components in dimension~$0$ this relation is clear. For the $1$-dimensional components we just have the relation $d \cdot \Delta_e = 0$ that we are assuming. To verify~\eqref{ExpCubicBIS} in dimension~$2$ (i.e., codimension~$1$), note that
$$
\Exp(\gamma) = \bigl[(0,0)\bigr] + \gamma - [E \times E] = \bigl[(0,0)\bigr] + [E \times 0] + [0\times E] - \Delta_E - [E \times E] \, .
$$
The relation in codimension~$1$ then follows (even without a factor~$d$) by direct calculation, using the theorem of the cube. For the components in dimension~$3$, finally, \eqref{ExpCubicBIS} follows from the remark that 
$$
j_{12,*}(\gamma) * j_{23,*}[E \times E] = 0 = j_{12,*}[E\times E] * j_{23,*}(\gamma)\, .
$$

Once we have relation~\eqref{ExpCubicBIS}, we can copy the proof of Thm.~\ref{HEFourier}, everywhere replacing $2^N$ by~$d$. The conclusion is that the~$\Fourier$ of~\eqref{IntFourDef}, now with $C=J=E$, is an integral motivic Fourier transform of $(E,\phi)$ up to a factor~$d$, for any $\phi \colon E \xrightarrow{\sim} E^t$. 

For the last assertion of the theorem, recall from Prop.~\ref{cubical-prop} that $2 \cdot \Delta_e = 0$, whereas by Remark~\ref{cubical-rem} in general $\Delta_e \neq 0$.
\end{proof}

\end{document}